\numberwithin{equation}{section}\theoremstyle{plain}
\newtheorem{theorem}{Theorem}[section]
\newtheorem{lemma}[theorem]{Lemma}
\newtheorem{corollary}[theorem]{Corollary}
\newtheorem{definition}[theorem]{Definition}
\newtheorem{remark}[theorem]{Remark}
\address{\begin{center}{\small Department of Mathematics and Computer Sciences, Faculty of Sciences-Mekn\`{e}s,\\
Equipe d'Analyse Harmonique et Probabilit\'{e}s, University Moulay Isma\"{\i}l,\\
BP 11201 Zitoune, Meknes, Morocco}
\end{center}}
\begin{document}

\title[The two-sided Gabor Quaternionic Fourier transform and some uncertainty principles]
{The two-sided Gabor Quaternionic Fourier transform and some uncertainty principles}

\author[ S.Fahlaoui and M.El kassimi]{ S.Fahlaoui  \quad and \quad  M.El kassimi}

\address{Sa\"{\i}d Fahlaoui}  \email{s.fahlaoui@fs.umi.ac.ma}

\address{Mohammed El kassimi} \email{m.elkassimi@edu.umi.ac.ma}

\maketitle

\maketitle
\begin{abstract}
In this paper, we define a new transform called the Gabor  quaternionic Fourier transform (GQFT), which generalizes the classical windowed Fourier transform to quaternion valued-signals, we give several important properties such as the Plancherel formula and  inversion formula. Finally, we establish the Heisenberg uncertainty principles for the GQFT.
\end{abstract} {\it keywords:} Quaternion algebra, Quaternionic Fourier transform, Gabor Fourier transform, Heisenberg uncertainty principle.\\
\section{Introduction}
As it is known, the quaternion Fourier transform (QFT) is a very useful mathematical tool. It has been discussed extensively in the literature and has proved  to be powerful and useful in some theories. In \cite{Assefa,Bulow,Ell0} the authors used the (QFT) to extend the colour image analysis. Researchers in \cite{Bayro} applied the QFT to image processing and neural computing techniques. The QFT is a generalisation of the real and complex Fourier transform (FT), but it  is ineffective in representing and computing local information about quaternionic signals. A lot of papers have been devoted to the extension of the theory of the windowed FT to the quaternionic case. Recently B\"{u}low and Sommer \cite{Bulow,Bulow1} extend the WFT to the quaternion algebra. They introduced a special case of the GQFT known as quaternionic Gabor filters. They applied these filters to obtain a local two-dimensional quaternionic phase. In \cite{Bahri} Bahri et al.
studied the right sided windowed quaternion Fourier transform. In \cite{Fu} the authors  studied two-sided windowed (QFT) for the case when the window has a real valued. Moreover, they also pointed out that the extension of the windowed Fourier transforms to the quaternionic case by means of a two-sided QFT is rather complicated in view of the non-commutativity. So for that, this paper attempts to study the two-sided quaternionic Gabor Fourier transform (GQFT) with the window has a quaternionic valued and some important properties are derived. We start by  reminding some results of two-sided quaternionic Fourier transform (QFT), we give some examples, to show the difference between the GQFT and WFT, and we establish important properties of the GQFT like inversion formula, Plancherel formula, using a version of Heisenberg uncertainty principle for two-sided QFT to prove a generalized uncertainty principle for GQFT. 
\subsection{Definition and properties of quaternion $\mathbb{H}$}: \\
The quaternion algebra $\mathbb{H}$ is defined over $\mathbb{R}$ with three imaginary units i, j and k obey the Hamilton's multiplication rules, \begin{equation}\label{equ1} ij=-ji=k, ~~ jk=-kj=i, ~~ki=-ik=j \end{equation}
$$i^2=j^2=k^2=ijk=-1$$
According to \ref{equ1} $\mathbb{H}$ is non-commutative, one cannot directly extend various results on complex numbers to a quaternion. For simplicity, we express a quaternion q as the sum of scalar $q_1$, and a pure 3D quaternion q.
Every quaternion can be written explicitly as:
$$q=q_1+iq_2+jq_3+kq_4\in \mathbb{H},~~ q_1,q_2,q_3,q_4\in\mathbb{R},$$
The conjugate of quaternion $q$ is obtained by changing the sign of the pure part, i.e.
$$\overline{q}=q_1-iq_2-jq_3-kq_4$$
The quaternion conjugation is a linear anti-involution $$\overline{\overline{p}}=p,~~~~ \overline{p+q}=\overline{p}+\overline{q},~~ \overline{pq}=\overline{q} \overline{p},~ \forall p,q\in \mathbb{H}$$
The modulus $|q|$ of a quaternion q is defined as $$|q|=\sqrt{q\overline{q}}=\sqrt{q_1^2+q_2^2+q_3^2+q_4^2},~~ |pq|=|p||q|. $$
It is straight forward to see that
$$|pq|=|p||q|, |q|=|\overline{q}|, p, q \in \mathbb{H}$$
In particular, when $q=q_1$ is a real number, the module $|q|$ reduces to the ordinary Euclidean modulus, i.e. $|q|=\sqrt{q_1q_1}$.  A function $f:\mathbb{R}^{2}\rightarrow\mathbb{H}$ can be expressed as
 $$f (x, y): =f_1 (x, y) +if_ {2} (x, y) +jf_3 (x, y)+kf_4 (x, y), $$
 where $(x,y)\in\mathbb{R}\times\mathbb{R}$.\\
We introduce an inner product of functions $f, g$ defined on $\mathbb{R}^2$ with values in $\mathbb{H}$ as follows $$<f,g>_{L^{2}(\mathbb{R}^2,\mathbb{H})}=\int_{\mathbb{R}^2}f(x)\overline{g(x)}dx$$
If $f=g$ we obtain the associated norm by $$\|f\|^{2}_{2}=<f,f>_{2}=\int_{\mathbb{R}^2}|f(x)|^2dx$$
The space $L^{2}(\mathbb{R}^2,\mathbb{H})$ is then defined as
$$L^2(\mathbb{R}^2,\mathbb{H})=\{f| f:\mathbb{R}^2\rightarrow\mathbb{H}, \|f\|_{2}<\infty\}$$
And we define the norm of $L^{2}(\mathbb{R}^2,\mathbb{H})$ by $$\|f\|^{2}_{L^{2}(\mathbb{R}^2,\mathbb{H})}=\|f\|^{2}_{2}$$
\section{The two-sided  Gabor Quaternionic  Fourier transform (GQFT)}

The quaternion Fourier transform (QFT) is an extension of Fourier transform proposed by Ell \cite{Ell}. Due to the non-commutative properties of quaternion, there are three different types of QFT, the left sided QFT, the right sided QFT and the two-sided QFT \cite{Ding}. In this paper  we only treat the two-sided QFT. We now review the definition and some properties of the two-sided QFT\cite{Li-ping}.\\

\begin{definition}[Quaternion Fourier transform]
  The two-sided quaternion Fourier transform (QFT)of a quaternion function
  $f\in L^{1}(\mathbb{R}^{2},\mathbb{H})$ is the function  $\mathcal{F}_{q}(f): \mathbb{R}^{2}\rightarrow \mathbb{H}$  defined by:\\ for $\omega=(\omega_1,\omega_2)\in \mathbb{R}\times\mathbb{R}$
  \begin{equation}\label{QFT}
  \mathcal{F}_{q}(f)(w)=\int_{\mathbb{R}^{2}}e^{-2\pi ix_1.\omega_1}f(x)e^{-2\pi jx_2.\omega_2}dx
  \end{equation}
  where $dx=dx_1dx_2$
\end{definition}
This transform can be inverted by means of:
\begin{theorem}\label{QFT-inversion}
If $f, \mathcal{F}_{q}(f)\in L^{2}(\mathbb{R}^2,\mathbb{H})$, then,
 \begin{equation}\label{inversion-QFT}
   f(x)=\mathcal{F}_{q}^{-1}{\mathcal{F}_q(f)}(x)=\int_{\mathbb{R}^{2}}e^{2\pi ix_1.\omega_1}\mathcal{F}_q(f)(\omega)e^{2\pi jx_2.\omega_2}d\omega
 \end{equation}
\end{theorem}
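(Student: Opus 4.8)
The plan is to verify the identity \eqref{inversion-QFT} by substituting the definition \eqref{QFT} of $\mathcal{F}_q(f)$ directly into the right-hand side and then performing the $\omega$-integration. Writing $y=(y_1,y_2)$ for the integration variable of the forward transform, the right-hand side of \eqref{inversion-QFT} becomes a double integral over $(y,\omega)\in\mathbb{R}^2\times\mathbb{R}^2$ whose integrand is the quaternion-valued expression $e^{2\pi ix_1\omega_1}\,e^{-2\pi iy_1\omega_1}\,f(y)\,e^{-2\pi jy_2\omega_2}\,e^{2\pi jx_2\omega_2}$. The first step is to invoke Fubini's theorem so that the integration in $\omega$ is carried out before the integration in $y$.

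The decisive algebraic observation is that although $\mathbb{H}$ is non-commutative, the two exponentials carrying the unit $i$ both lie in the commutative subfield $\mathbb{R}\oplus i\mathbb{R}$ and therefore commute with one another, and likewise the two exponentials carrying the unit $j$ commute with one another. This permits me to merge the kernels on each side of $f(y)$ \emph{without} having to move anything past the non-commutative factor $f(y)$:
$$e^{2\pi ix_1\omega_1}\,e^{-2\pi iy_1\omega_1}=e^{2\pi i(x_1-y_1)\omega_1},\qquad e^{-2\pi jy_2\omega_2}\,e^{2\pi jx_2\omega_2}=e^{2\pi j(x_2-y_2)\omega_2}.$$
This is exactly the structural feature that renders the two-sided transform invertible in spite of the non-commutativity: the collapse of the kernels occurs independently on the left (unit $i$) and on the right (unit $j$).

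Since the merged left factor depends only on $\omega_1$ and the merged right factor only on $\omega_2$, the $\omega$-integral factors as a product of two one-dimensional integrals that bracket $f(y)$, and each of these is recognised as a Dirac kernel,
$$\int_{\mathbb{R}}e^{2\pi i(x_1-y_1)\omega_1}\,d\omega_1=\delta(x_1-y_1),\qquad \int_{\mathbb{R}}e^{2\pi j(x_2-y_2)\omega_2}\,d\omega_2=\delta(x_2-y_2),$$
where each $\delta$ is a real scalar distribution (the imaginary unit inside the oscillatory integral plays only the role of the imaginary unit of the corresponding commutative subfield). Because these two deltas are scalars, they commute freely with $f(y)$, so integrating in $y$ sifts out the value $f(x_1,x_2)=f(x)$, which is the claimed identity.

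The main obstacle is rigor rather than algebra: for a general $f\in L^2(\mathbb{R}^2,\mathbb{H})$ the iterated integral is not absolutely convergent, so neither the appeal to Fubini nor the Dirac-kernel identities are literally valid. I would handle this in the standard way by first establishing the formula on the dense subspace of Schwartz (or $L^1\cap L^2$) functions, where the manipulations above are legitimate, and then extending to all of $L^2$ by a density argument using the Plancherel identity for the QFT. Alternatively, one may regularise the $\omega$-integral with a Gaussian factor $e^{-\varepsilon|\omega|^2}$, turning the singular kernels into genuine approximate identities, and pass to the limit $\varepsilon\to 0^{+}$ in $L^2$; the commutation step above is what guarantees that the regularised computation still decouples cleanly into the two scalar approximate identities.
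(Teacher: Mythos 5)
The paper itself offers no proof of this theorem: it is stated as part of the review of the two-sided QFT, with the section's results implicitly deferred to the literature (the reference \cite{Li-ping} cited at the head of the section, the same source invoked for the Plancherel theorem that follows). So there is no internal proof to compare against; your argument stands on its own, and in outline it is correct. You have isolated exactly the right structural point: in the two-sided transform the $i$-kernels sit to the left of $f$ and the $j$-kernels to the right, so the forward and inverse kernels merge inside the commutative subfields $\mathbb{R}\oplus i\mathbb{R}$ and $\mathbb{R}\oplus j\mathbb{R}$ respectively, without ever being moved across the quaternion-valued factor $f(y)$; the merged kernels then collapse to \emph{real scalar} Dirac distributions, which do commute with $f(y)$. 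This is precisely the mechanism that makes the two-sided QFT invertible despite non-commutativity, and it is the content that a proof of this theorem must contain.

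One refinement concerns how you organize the rigor. Your closing paragraph presents ``dense subspace where the manipulations are legitimate'' and ``Gaussian regularization'' as two alternative repairs, but the first alone does not quite work: even for Schwartz $f$, Fubini is not literally applicable to the double integral over $(y,\omega)$, because the integrand has modulus $|f(y)|$ independent of $\omega$ and hence is never absolutely integrable on $\mathbb{R}^2\times\mathbb{R}^2$; the Dirac-kernel identities are likewise purely distributional. The two devices should be composed rather than offered as alternatives: insert the factor $e^{-\varepsilon|\omega|^2}$ (a real scalar, so it can be placed anywhere in the product), carry out the now absolutely convergent computation to obtain two one-dimensional Gaussian approximate identities bracketing $f$, pass to the limit $\varepsilon\to 0^{+}$ for $f$ in the dense class $L^1\cap L^2$ (or Schwartz), and only then extend to all of $L^2$ by continuity using the Plancherel isometry for $\mathcal{F}_q$ and $\mathcal{F}_q^{-1}$. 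With that rearrangement your outline becomes the standard complete proof; as written, it is correct in substance with this one presentational gap.
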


\begin{theorem}[Plancherel theorem for QFT ]\label{placherel-QFT}
If $f\in L^{2}(\mathbb{R}^2,\mathbb{H})$ then
\begin{equation}\label{Planch-QFT}
\|f\|_{2}=\|\mathcal{F}_{q}(f)\|_{2}
\end{equation}
\end{theorem}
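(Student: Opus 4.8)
The plan is to exploit the fact that, although $\mathbb{H}$ is non-commutative, left multiplication and right multiplication by fixed quaternions always commute with each other; this lets me factor the two-sided kernel and reduce the problem to two one-dimensional transforms to which the classical Plancherel theorem applies. First I would write $\mathcal{F}_q = \mathcal{F}_2 \circ \mathcal{F}_1$, where for $f \in L^2(\mathbb{R}^2,\mathbb{H})$,
\begin{equation}\label{partials}
(\mathcal{F}_1 f)(\omega_1,x_2) = \int_{\mathbb{R}} e^{-2\pi i x_1 \omega_1} f(x_1,x_2)\,dx_1, \quad (\mathcal{F}_2 g)(\omega_1,\omega_2) = \int_{\mathbb{R}} g(\omega_1,x_2)\, e^{-2\pi j x_2 \omega_2}\,dx_2 .
\end{equation}
Because the left factor $e^{-2\pi i x_1\omega_1}$ and the right factor $e^{-2\pi j x_2\omega_2}$ commute, this factorization and the interchange of integrations are legitimate (first on a dense class such as $L^1\cap L^2$, then extended by density). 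It then suffices to show that each of $\mathcal{F}_1$ and $\mathcal{F}_2$ is an $L^2$-isometry, since in that case $\|f\|_2 = \|\mathcal{F}_1 f\|_2 = \|\mathcal{F}_2\mathcal{F}_1 f\|_2 = \|\mathcal{F}_q f\|_2$.

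For $\mathcal{F}_1$ I would regard $\mathbb{H}$ as a left vector space over $\mathbb{C}_i := \mathbb{R}+\mathbb{R}i$ and write $f = a + bj$ with $a = f_1 + i f_2$ and $b = f_3 + i f_4$ in $\mathbb{C}_i$. The two key algebraic facts are that left multiplication by $e^{-2\pi i x_1\omega_1}$ acts as ordinary complex scalar multiplication on each $\mathbb{C}_i$-component, so that $\mathcal{F}_1 f = \widehat{a} + \widehat{b}\,j$ where $\widehat{a},\widehat{b}$ are the classical $\mathbb{C}_i$-valued one-dimensional Fourier transforms of $a,b$ in $x_1$, and that the modulus is additive over the components, $|a+bj|^2 = |a|^2 + |b|^2$. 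Combining the pointwise identity with Fubini's theorem reduces the claim $\|\mathcal{F}_1 f\|_2 = \|f\|_2$ to the classical one-dimensional Plancherel theorem applied to $a$ and to $b$ in the variable $x_1$, with $x_2$ as a parameter.

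The argument for $\mathcal{F}_2$ is symmetric but requires a different splitting adapted to right multiplication by $e^{-2\pi j x_2\omega_2}$: I would view $\mathbb{H}$ as a right vector space over $\mathbb{C}_j := \mathbb{R}+\mathbb{R}j$ and write $g = c + i d$ with $c,d \in \mathbb{C}_j$, so that right multiplication by $e^{-2\pi j x_2\omega_2}$ again acts as genuine complex scalar multiplication on each component and $|c+id|^2 = |c|^2+|d|^2$. Classical Plancherel in $x_2$ then gives $\|\mathcal{F}_2 g\|_2 = \|g\|_2$, completing the chain of equalities.

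I expect the only real obstacle to be bookkeeping the non-commutativity: one must select the correct module structure on $\mathbb{H}$ for each side ($\mathbb{C}_i$ acting on the left for the $x_1$-integral, $\mathbb{C}_j$ acting on the right for the $x_2$-integral) and verify the two supporting identities, namely that the exponential acts as an honest complex scalar on each component and that the modulus splits additively; the computation $aj = j\overline{a}$ for $a\in\mathbb{C}_i$ is exactly what makes the modulus additive. Once these are established, the whole statement collapses to the scalar Plancherel theorem together with Fubini's theorem, and no delicate analysis beyond the density extension is required.
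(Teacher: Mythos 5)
Your proof is correct, but there is nothing in the paper to compare it against step by step: the paper does not prove this theorem at all --- its entire proof is the citation ``See \cite{Li-ping}''. Your argument is therefore a genuinely self-contained alternative, and it checks out. The factorization $\mathcal{F}_q=\mathcal{F}_2\circ\mathcal{F}_1$ is legitimate because the right kernel $e^{-2\pi j x_2\omega_2}$ does not involve $x_1$, so it passes through the inner integral; the splitting $\mathbb{H}=\mathbb{C}_i\oplus\mathbb{C}_i j$ turns left multiplication by $e^{-2\pi i x_1\omega_1}$ into componentwise complex scalar multiplication with $|a+bj|^2=|a|^2+|b|^2$, and the splitting $\mathbb{H}=\mathbb{C}_j\oplus i\,\mathbb{C}_j$ does the same for right multiplication by $e^{-2\pi j x_2\omega_2}$; the classical one-dimensional Plancherel theorem is then applied section-wise (to $f(\cdot,x_2)$ for a.e.\ $x_2$, then to $(\mathcal{F}_1 f)(\omega_1,\cdot)$ for a.e.\ $\omega_1$, all of these sections lying in $L^1\cap L^2$ of the line when $f\in L^1\cap L^2(\mathbb{R}^2,\mathbb{H})$), and Fubini plus the density extension finish the job. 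For contrast, the usual route in the QFT literature --- including the style of argument in the sources this paper cites, e.g.\ \cite{Hitzar} and \cite{Li-ping} --- is the orthogonal plane split $f_\pm=\frac{1}{2}(f\pm i f j)$, which satisfies $e^{-2\pi i x_1\omega_1}f_\pm = f_\pm e^{\pm 2\pi j x_1\omega_1}$, so the two-sided kernel collapses into a single right-sided complex exponential and the two-dimensional complex Plancherel theorem applies in one stroke to each of the two orthogonal pieces. That split is slicker and global; your version is more elementary (only the one-dimensional classical theorem is invoked) and makes the non-commutative bookkeeping completely explicit, at the cost of juggling two different complex structures, one acting on the left and one on the right. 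Your closing observation is also sound: the identity $aj=j\overline{a}$ for $a\in\mathbb{C}_i$ is precisely what kills the cross terms in $|a+bj|^2$, i.e.\ it expresses the orthogonality of the planes $\mathbb{C}_i$ and $\mathbb{C}_i j$ inside $\mathbb{R}^4$.
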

\begin{proof}
  See \cite{Li-ping}
\end{proof}
 \begin{definition}
   A quaternion window function is a non null function $\varphi\in L^{2}(\mathbb{R}^{2},\mathbb{H})$  \\
 \end{definition}

Based on the above formula \ref{QFT} for the QFT, we establish the following definition of the two-sided Gabor quaternionic  Fourier transform (GQFT).
\begin{definition}\label{def-GQFT}
  We define the GQFT of $f\in L^{2}(\mathbb{R}^2,\mathbb{H})$ with respect to non-zero quaternion window function
  $\varphi\in L^{2}(\mathbb{R}^2,\mathbb{H})$ as,
  \begin{equation} \label{eqGQFT}
 \mathcal{G}_{\varphi}f(\omega,b)=\int_{\mathbb{R}^2}e^{-2\pi ix_1\omega_1}f(x)\overline{\varphi(x-b)}e^{-2\pi jx_2.\omega_2}dx
  \end{equation}
\end{definition}
Note that the order of the exponentials in \ref{eqGQFT} is fixed because of the non-commutativity of the product of quaternion.\\
The energy density is defined as the modulus square of GQFT \ref{def-GQFT} given by
\begin{equation}\label{energy}
  |G_{\varphi} f(\omega,b)|^2=|\int_{\mathbb{R}^2}e^{-2i\pi x_1\omega_1}f(x)\overline{\varphi(x-b)}e^{-2j\pi x_2\omega_2}dx|^2
\end{equation}

The equation \ref{energy} is often called a spectogram which measures the energy of a quaternion-valued function f in the position-frequency neighbourhood of $(\omega,b)$.

\subsection{Examples of the GQFT}\leavevmode\par
For illustrative purposes, we shall discuss examples of the GQFT. We begin with a straightforward example.\\
\textbf{Example 1}\\
Consider the two-dimensional window function defined by
\begin{equation}
  \varphi(x)=\left\{
  \begin{array}{ll}
    1, ~~\hbox{for}~~ -1\leq x_1\leq 1~~ \hbox{and}~~ -1\leq x_2\leq 1; \\\\

    0, ~~\hbox{otherwise}
  \end{array}
\right.
\end{equation}
Obtain the GQFT of the function defined as follows
\begin{equation}
  f(x)=\left\{
  \begin{array}{ll}
    e^{-x_1-x_2},  0\leq x_1\leq +\infty~~ \hbox{and}~~ 0\leq x_2\leq +\infty; \\\\

    0,~~ \hbox{otherwise}
  \end{array}
\right.
\end{equation}
By applying the definition of the GQFT we have
\begin{eqnarray*}
% \nonumber % Remove numbering (before each equation)
  G_{\varphi}f(\omega,b) &=& \int_{m_1}^{1+b_1}\int_{m_2}^{1+b_2}e^{-i2\pi x_1\omega_1}e^{-x_1-x_2}e^{-j2\pi x_2 \omega_2}dx_1dx_2, \\
 &~~& \hbox{with}~~ m_1=max(0,-1+b_1); m_2=max(0,-1+b_2), \\
   &=&\int_{m_1}^{1+b_1}e^{-x_1(1+i2\pi\omega_1)}dx_1\int_{m_2}^{1+b_2}e^{-x_2(1+j2\pi\omega_2)}dx_2, \\
   &=& [\frac{e^{-x_1(1+i2\pi\omega_1)}}{(-1-i2\pi\omega_1)}]_{m_1}^{1+b_1}[\frac{e^{-x_2(1+j2\pi\omega_2)}}{(-1-j2\pi\omega_2)}]_{m_2}^{1+b_2}, \\
   &=& \frac{1}{(-1-i2\pi\omega_1)(-1-j2\pi\omega_2)}(e^{-(1+b_1)(1+i2\pi\omega_1)}-e^{-m_1(1+i2\pi\omega_1)})(e^{-(1+b_2)(1+j2\pi\omega_2)}-e^{-m_2(1+j2\pi\omega_2)}).
\end{eqnarray*}

\textbf{Example 2}\\
Given the window function of the two-dimensional Haar function defined by:

\begin{equation}
  \varphi(x)=\left\{
  \begin{array}{ll}
    1, ~~\hbox{for} ~~0\leq x_1\leq \frac{1}{2}~~ \hbox{and}~~ 0\leq x_2\leq \frac{1}{2}; \\\\

    -1,~~\hbox{for} ~~\frac{1}{2}\leq x_1\leq 1~~ \hbox{and}~~ \frac{1}{2}\leq x_2\leq 1; \\\\

    0, ~~\hbox{otherwise};
  \end{array}
\right.
\end{equation}

find the GQFT of the Gaussian function $f(x)=e^{-(x_1^2+x_2^2)}$.\\
From definition \ref{def-GQFT} we obtain
\begin{eqnarray*}
% \nonumber % Remove numbering (before each equation)
 \mathcal{G}_{\varphi}\{f\}(\omega,b)&=&\int_{\mathbb{R}^2} e^{-i2\pi x_1\omega_1}f(x)\overline{\varphi(x-b)} e^{-j2\pi x_2\omega_2} dx, \\
    &=& \int_{b_1}^{\frac{1}{2}+b_1} e^{-i2\pi x_1\omega_1}e^{-x_1}dx_1\int_{b_2}^{\frac{1}{2}+b_2}e^{-x_2} e^{-j2\pi x_2\omega_2}dx_2, \\
    &~~&- \int_{\frac{1}{2}+b_1}^{1+b_1} e^{-i2\pi x_1\omega_1}e^{-x_1}dx_1\int_{\frac{1}{2}+b_2}^{1+b_2}e^{-x_2} e^{-j2\pi x_2\omega_2}dx_2,
\end{eqnarray*}
by completing squares, we have
\begin{eqnarray*}
% \nonumber % Remove numbering (before each equation)
\mathcal{G}_{\varphi}\{f\}(\omega,b) &=& \int_{b_1}^{\frac{1}{2}+b_1} e^{-(x_1+i\pi \omega_1)^2}e^{-(\omega_1\pi)^2}dx_1\int_{b_2}^{\frac{1}{2}+b_2} e^{-(x_2+j\pi \omega_2)^2}e^{-(\omega_2\pi)^2}dx_2 \\
 &~~&-  \int_{\frac{1}{2}+b_1}^{1+b_1} e^{-(x_1+i\pi \omega_1)^2}e^{-(\omega_1\pi)^2}dx_1\int_{\frac{1}{2}+b_2}^{1+b_2}e^{-(x_2+j\pi \omega_2)^2}e^{-(\omega_2\pi)^2}dx_2
\end{eqnarray*}
making the substitutions $y_1=x_1+i\pi\omega_1$ and $y_2=x_2+j\pi\omega_2$ in the above expression we immediately obtain :
\begin{align}
% \nonumber % Remove numbering (before each equation)
  &\mathcal{G}_{\varphi}\{f\}(\omega,b) = e^{-(\omega_1^2+\omega_2^2)\pi^2}(\int_{b_1+i\pi\omega_1}^{\frac{1}{2}+b_1+i\pi\omega_1} e^{-y_1^2}dy_1\int_{b_2+j\pi\omega_2}^{\frac{1}{2}+b_2+j\pi\omega_2} e^{-y_2^2}dy_2
 -  \int_{\frac{1}{2}+b_1+i\pi\omega_1}^{1+b_1+i\pi\omega_1} e^{-y_1^2}dy_1 \int_{\frac{1}{2}+b_2+j\pi\omega_2}^{1+b_2+j\pi\omega_2}e^{-y_2^2}dy_2, ) \nonumber\\
  &= e^{-(\omega_1^2+\omega_2^2)\pi^2}( \int_{0}^{1+b_1+i\pi\omega_1}(- e^{-y_1^2})dy_1 +\int_{0}^{\frac{1}{2}+b_1+i\pi\omega_1} e^{-y_1^2}dy_1)
  \times  (\int_{0}^{1+b_2+j\pi\omega_2}(-e^{-y_2^2})dy_2+\int_{0}^{\frac{1}{2}+b_2+j\pi\omega_2} e^{-y_2^2}dy_2) \nonumber\\
   &-  e^{-(\omega_1^2+\omega_2^2)\pi^2}( \int_{0}^{\frac{1}{2}+b_1+i\pi\omega_1}(- e^{-y_1^2})dy_1 +\int_{0}^{1+b_1+i\pi\omega_1} e^{-y_1^2}dy_1)
\times (\int_{0}^{\frac{1}{2}+b_2+j\pi\omega_2}(-e^{-y_2^2})dy_2+\int_{0}^{1+b_2+j\pi\omega_2} e^{-y_2^2}dy_2),\label{exemple2}
\end{align}
Equation \ref{exemple2} can be written in the form

\begin{align*}
\mathcal{G}_{\varphi}\{f\}(\omega,b)=e^{-(\omega_1^2+\omega_2^2)\pi^2}\{[-qf(1+b_1+i\pi\omega_1)+qf(\frac{1}{2}+b_1+i\pi\omega_1)]
\times&[-qf(1+b_2+j\pi\omega_2)+qf(\frac{1}{2}+b_2+j\pi\omega_2)]   \\
  -[-qf(\frac{1}{2}+b_1+i\pi\omega_1)+qf(1+b_1+i\pi\omega_1)]  \times&[-qf(\frac{1}{2}+b_2+j\pi\omega_2)+qf(1+b_2+j\pi\omega_2)]\}
\end{align*}

Where, ~~$qf(x)=\int_{0}^{x}e^{-t^2}dt.$

\section{Properties of GQFT}
In this section, we are going to to give some properties for the Gabor quaternionic  Fourier transform.
\begin{theorem}
  Let $f\in L^{2}(\mathbb{R}^2,\mathbb{H})$; and $\varphi\in L^{2}(\mathbb{R}^2,\mathbb{H})$ be a non zero quaternionic window function. Then, we have
 $$(\mathcal{G}_{\varphi}\{T_y f\}(\omega,b)=e^{-2i\pi y_1\omega_1} (G_\varphi f)(\omega,b-y)e^{-2j\pi x_2.\omega_2}$$
 where $T_yf(x)=f(x-y)$; and $y=(y_1,y_2)\in\mathbb{R}^2$
\end{theorem}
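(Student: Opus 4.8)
The plan is to start directly from the definition \ref{eqGQFT} of the GQFT applied to $T_yf$ and then perform the substitution $u=x-y$, which transfers the translation of the signal into the argument of the window. First I would write
$$\mathcal{G}_{\varphi}\{T_yf\}(\omega,b)=\int_{\mathbb{R}^2}e^{-2\pi ix_1\omega_1}f(x-y)\overline{\varphi(x-b)}e^{-2\pi jx_2\omega_2}\,dx,$$
and then set $u=x-y$, so that $dx=du$, $x_1=u_1+y_1$, $x_2=u_2+y_2$, and $x-b=u-(b-y)$. This turns the integrand into $e^{-2\pi i(u_1+y_1)\omega_1}f(u)\overline{\varphi(u-(b-y))}e^{-2\pi j(u_2+y_2)\omega_2}$, which already exhibits the shifted window argument $u-(b-y)$ that will produce the $(\omega,b-y)$ evaluation.

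Next I would use the additive-to-multiplicative property of the exponential on each side, factoring $e^{-2\pi i(u_1+y_1)\omega_1}=e^{-2\pi iy_1\omega_1}e^{-2\pi iu_1\omega_1}$ and $e^{-2\pi j(u_2+y_2)\omega_2}=e^{-2\pi ju_2\omega_2}e^{-2\pi jy_2\omega_2}$. The only genuinely delicate point — and really the whole content of the lemma — is the non-commutativity of $\mathbb{H}$: I must pull $e^{-2\pi iy_1\omega_1}$ out to the far left of the integral and $e^{-2\pi jy_2\omega_2}$ out to the far right, and this is legitimate precisely because each split-off constant commutes with the remaining same-axis exponential (both $i$-exponentials lie in the commutative subalgebra $\{1,i\}\cong\mathbb{C}$, and both $j$-exponentials lie in $\{1,j\}\cong\mathbb{C}$). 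Crucially, I cannot move $e^{-2\pi iy_1\omega_1}$ past the quaternion-valued factor $f(u)\overline{\varphi(u-(b-y))}$, nor exchange the two split-off factors across it; choosing the left factorization for the $i$-exponent and the right factorization for the $j$-exponent is exactly what keeps the two-sided placement intact.

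Finally, since $e^{-2\pi iy_1\omega_1}$ and $e^{-2\pi jy_2\omega_2}$ are constant in $u$, they pull outside the integral, leaving
$$e^{-2\pi iy_1\omega_1}\left(\int_{\mathbb{R}^2}e^{-2\pi iu_1\omega_1}f(u)\overline{\varphi(u-(b-y))}e^{-2\pi ju_2\omega_2}\,du\right)e^{-2\pi jy_2\omega_2},$$
and the bracketed integral is, by Definition \ref{def-GQFT}, exactly $(\mathcal{G}_\varphi f)(\omega,b-y)$. This gives the asserted identity (with the terminal factor being $e^{-2\pi jy_2\omega_2}$). I do not expect any obstacle beyond the careful bookkeeping of which exponential may be commuted past which factor; everything else is a routine change of variables.
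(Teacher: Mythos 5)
Your proof is correct and follows essentially the same route as the paper's own proof: the change of variables $t=x-y$, the factoring of each exponential inside its commutative subalgebra ($\{1,i\}$ on the left, $\{1,j\}$ on the right), and pulling the constant factors out on their respective sides before recognizing the remaining integral as $\mathcal{G}_{\varphi}\{f\}(\omega,b-y)$. Your write-up is in fact more careful than the paper's, which carries typographical slips (the integrand written as $f(x)$ after substitution, and the terminal factor printed as $e^{-2j\pi x_2\omega_2}$ where it should be $e^{-2\pi j y_2\omega_2}$, exactly as you conclude).
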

\begin{proof}
  We have $$\mathcal{G}_{\varphi}\{T_y f\}(w,b)=\int_{\mathbb{R}^2} e^{-2\pi ix_1\omega_1}f(x)\overline{\varphi(x-b)}e^{-2\pi j x_2.\omega_2}dx$$
  we take $t=x-y$, then
 \begin{eqnarray*}
 % \nonumber % Remove numbering (before each equation)
   \mathcal{G}_{\varphi}\{T_y f\}(w,b) &=& \int_{\mathbb{R}^2} e^{-2i\pi( t_1+y_1)\omega_1}f(x)\overline{\varphi(t+y-b)}e^{-2j\pi( t_2+y_2)\omega_2}dt \\
    &=& e^{-2i\pi y_1\omega_1}  \int_{\mathbb{R}^2} e^{-2i\pi t_1\omega_1}f(x)\overline{\varphi(t+y-b)}e^{-2j\pi t_2 \omega_2}dt ~~e^{-2i\pi y_2\omega_2}\\
    &=& e^{-2i\pi y_1\omega_1} \mathcal{G}_{\varphi}\{f\}(\omega,b-y)e^{-2j\pi x_2.\omega_2}.
 \end{eqnarray*}
\end{proof}
\begin{theorem}
  Let $\varphi\in L^{2}(\mathbb{R}^2,\mathbb{H})$ a quaternion window function. Then we have
  $$G_{\widetilde{\varphi}}(\widetilde{f})(\omega,b)=\mathcal{G}_{\varphi}\{f\}(-\omega,-b)$$
  Where $\widetilde{\varphi}(x)=\varphi(-x)$; $\forall \varphi\in  L^{2}(\mathbb{R}^2,\mathbb{H})$
\end{theorem}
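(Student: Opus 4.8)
The plan is to unfold the definition of the GQFT from Definition \ref{def-GQFT} directly and then perform a single reflection change of variables. Writing out $G_{\widetilde{\varphi}}(\widetilde{f})(\omega,b)$ with the kernel placed exactly as in equation \ref{eqGQFT}, I would substitute $\widetilde{f}(x)=f(-x)$ and $\widetilde{\varphi}(x-b)=\varphi\big(-(x-b)\big)=\varphi(b-x)$, so that the integrand becomes $e^{-2\pi i x_1\omega_1}f(-x)\,\overline{\varphi(b-x)}\,e^{-2\pi j x_2\omega_2}$.

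The key step is the change of variables $t=-x$, i.e.\ $x=-t$. Since this is the reflection through the origin in $\mathbb{R}^2$, its Jacobian determinant is $(-1)^2=1$, so $dx=dt$ and the domain of integration remains all of $\mathbb{R}^2$. Under this substitution the left kernel $e^{-2\pi i x_1\omega_1}$ becomes $e^{2\pi i t_1\omega_1}=e^{-2\pi i t_1(-\omega_1)}$, the right kernel $e^{-2\pi j x_2\omega_2}$ becomes $e^{-2\pi j t_2(-\omega_2)}$, and the window argument $b-x$ becomes $b+t=t-(-b)$. Hence the integral turns into $\int_{\mathbb{R}^2}e^{-2\pi i t_1(-\omega_1)}f(t)\,\overline{\varphi\big(t-(-b)\big)}\,e^{-2\pi j t_2(-\omega_2)}\,dt$, which is exactly $\mathcal{G}_{\varphi}\{f\}(-\omega,-b)$ by Definition \ref{def-GQFT}. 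This finishes the computation.

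The only point that requires care — and the reason this identity is worth isolating in the quaternionic setting — is that one must not disturb the left/right placement of the two exponential factors around $f$, since $i$ and $j$ do not commute. Fortunately the reflection $t=-x$ acts diagonally on the coordinates and leaves each factor in its original position, so no reordering ever occurs and the non-commutativity never comes into play; the whole argument reduces to the unit-Jacobian bookkeeping and the sign flips in the exponents. I therefore expect no genuine obstacle here, only the need to track the signs in the exponentials consistently so that the negated frequency $-\omega$ and the shifted window argument $t-(-b)$ emerge in precisely the form demanded by Definition \ref{def-GQFT}.
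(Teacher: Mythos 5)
Your proof is correct and follows essentially the same route as the paper: unfold Definition \ref{def-GQFT}, reflect through the origin, and read off the result as $\mathcal{G}_{\varphi}\{f\}(-\omega,-b)$. The only difference is that you make the substitution $t=-x$ and its unit Jacobian explicit, whereas the paper leaves that change of variables implicit after rewriting the exponents as $e^{-2i\pi(-x_1)(-\omega_1)}$ and $e^{-2j\pi(-x_2)(-\omega_2)}$; your version is, if anything, slightly more careful.
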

\begin{proof}
  A direct calculation allows us to obtain for every $f\in  L^{2}(\mathbb{R}^2,\mathbb{H})$
\begin{eqnarray*}
% \nonumber % Remove numbering (before each equation)
  G_{\widetilde{\varphi}}(\widetilde{f})(\omega,b) &=& \int_{\mathbb{R}^2} e^{-2i\pi x_1\omega_1}f(-x)\overline{\varphi(-(x-b))}e^{-2j\pi x_2 \omega_2}dx \\
   &=&  \int_{\mathbb{R}^2} e^{-2i\pi(- x_1)(-\omega_1)}f(-x)\overline{\varphi(-x-(-b))}e^{-2j\pi (-x_2) (-\omega_2)}dx  \\
   &=& \mathcal{G}_{\varphi}\{f\}(-\omega,-b)
\end{eqnarray*}

\end{proof}
For establishing an inversion formula and Plancherel identity for GQFT we use the fact that, the GQFT can be expressed in terms  of two-sided quaternionic Fourier transform.
$$\mathcal{G}_{\varphi}\{f\}(\omega,b)=\mathcal{F}_{q}\{f(.)\varphi(.-b)\}(\omega)$$

\begin{theorem}[Inversion formula]
  Let $\varphi$ be a quaternion window function. Then for every function $f\in L^{2}(\mathbb{R}^2,\mathbb{H})$ can be reconstructed  by :
  $$f(x)=\frac{1}{\|\varphi\|^2_{2}}\int_{\mathbb{R}^2}\int_{\mathbb{R}^2}e^{2i\pi x_1\omega_1}G_\varphi f(w,b)e^{2j\pi x_2\omega_2}\varphi(x-b)d\omega db$$
\end{theorem}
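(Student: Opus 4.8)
The plan is to reduce the inversion to the already-established QFT inversion formula (Theorem \ref{QFT-inversion}) via the factorization $\mathcal{G}_{\varphi}\{f\}(\omega,b)=\mathcal{F}_{q}\{f(\cdot)\overline{\varphi(\cdot-b)}\}(\omega)$ noted just above the statement. For each fixed translation parameter $b\in\mathbb{R}^2$, the map $x\mapsto f(x)\overline{\varphi(x-b)}$ lies in $L^2(\mathbb{R}^2,\mathbb{H})$, so applying Theorem \ref{QFT-inversion} to this function yields
$$f(x)\overline{\varphi(x-b)}=\int_{\mathbb{R}^2}e^{2\pi i x_1\omega_1}\,\mathcal{G}_\varphi f(\omega,b)\,e^{2\pi j x_2\omega_2}\,d\omega.$$

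First I would multiply this identity \emph{on the right} by $\varphi(x-b)$. The ordering is essential: because of non-commutativity, only right multiplication produces the factor $\overline{\varphi(x-b)}\varphi(x-b)=|\varphi(x-b)|^2$, which is a real scalar and therefore commutes freely with $f(x)$. This gives
$$f(x)\,|\varphi(x-b)|^2=\int_{\mathbb{R}^2}e^{2\pi i x_1\omega_1}\,\mathcal{G}_\varphi f(\omega,b)\,e^{2\pi j x_2\omega_2}\,\varphi(x-b)\,d\omega.$$

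Next I would integrate both sides with respect to $b$ over $\mathbb{R}^2$. On the left the real factor pulls out of the integrand, and the substitution $u=x-b$ gives $\int_{\mathbb{R}^2}|\varphi(x-b)|^2\,db=\int_{\mathbb{R}^2}|\varphi(u)|^2\,du=\|\varphi\|_2^2$. Dividing by $\|\varphi\|_2^2$ then produces exactly the claimed reconstruction formula.

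The main obstacle is justifying the passage to the double integral: one must invoke Fubini's theorem to legitimize interchanging the $b$-integration with the inner $\omega$-integration, which I would control by the Plancherel identity (Theorem \ref{placherel-QFT}) applied fiberwise, giving $\int_{\mathbb{R}^2}|\mathcal{G}_\varphi f(\omega,b)|^2\,d\omega=\|f(\cdot)\overline{\varphi(\cdot-b)}\|_2^2$, together with a Cauchy--Schwarz estimate to secure absolute integrability in $(\omega,b)$. The non-commutative bookkeeping---keeping each exponential kernel on its correct side and never transporting $\varphi(x-b)$ across the quaternion-valued integrand---is the point demanding the most care, but once the product collapses to the real scalar $|\varphi|^2$ the remaining steps are elementary.
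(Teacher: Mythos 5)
Your proposal is correct and follows essentially the same route as the paper's own proof: factor the GQFT through the two-sided QFT, invert via Theorem \ref{QFT-inversion}, right-multiply by $\varphi(x-b)$ so that $\overline{\varphi(x-b)}\varphi(x-b)=|\varphi(x-b)|^2$ collapses to a real scalar, integrate in $b$, and divide by $\|\varphi\|_2^2$. In fact you are more careful than the paper, which never explicitly states what it multiplies by on the right nor addresses the Fubini/integrability issue you flag.
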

\begin{proof}
  We have $$\mathcal{G}_{\varphi}\{f\}(\omega,b)=\int_{\mathbb{R}^2}e^{-2i\pi x_1\omega_1} f(x)\overline{\varphi(x-b)}e^{-2j\pi x_2\omega_2}dx$$
  then
  \begin{equation}\label{equa2}
   \mathcal{G}_{\varphi}\{f\}(\omega,b)=\mathcal{F}_{q}(f(x)\overline{\varphi(x-b)})
  \end{equation}
  Taking the inverse of two-sided QFT of both sides of \ref{equa2} we obtain
  \begin{eqnarray}
  % \nonumber % Remove numbering (before each equation)
     f(x)\overline{\varphi(x-b)} &=& \mathcal{F}^{-1}_{q}{G_\varphi f(\omega,b)}(x) \nonumber\\
     &=& \int_{\mathbb{R}^2}e^{2i\pi x_1\omega_1}\mathcal{G}_{\varphi}\{f\}(\omega,b)e^{2j\pi x_2\omega_2}d\omega, \label{equ3}
  \end{eqnarray}

Multiplying both sides of \ref{equ3} from the right and integrating with respect to $db$  we get

$$f(x)\int_{\mathbb{R}^2}|\varphi(x-b)|^2db=\int_{\mathbb{R}^2}\int_{\mathbb{R}^2}e^{2i\pi x_1\omega_1}G_\varphi f(w,b)e^{2j\pi x_2\omega_2}\varphi(x-b)d\omega db$$
then,
$$f(x)=\frac{1}{\|\varphi\|^2_{2}}\int_{\mathbb{R}^2}\int_{\mathbb{R}^2}e^{2i\pi x_1\omega_1}G_\varphi f(w,b)e^{2j\pi x_2\omega_2}\varphi(x-b)d\omega db$$
Set $C_\varphi=\|\varphi\|^2_{\mathbb{R}^2}$ and assume that $0<C_\varphi<\infty$. Then the inversion formula can also written as

$$f(x)=\frac{1}{C_\varphi}\int_{\mathbb{R}^2}\int_{\mathbb{R}^2}e^{2i\pi x_1\omega_1}G_\varphi f(w,b)e^{2j\pi x_2\omega_2}\varphi(x-b)d\omega db$$

\end{proof}
\begin{theorem}[Plancherel theorem ]\label{Parseval-GQFT}
  Let $\varphi$ be quaternion window function and\\ $f\in L^{2}(\mathbb{R}^2,\mathbb{H})$, then we have
  \begin{equation}\label{parseval1}
  \|\mathcal{G}_{\varphi}\{f\}\|^2_{2}=\|f\|^2_{2}\|\varphi\|^2_{2}
  \end{equation}
\end{theorem}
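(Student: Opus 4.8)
The plan is to reduce the GQFT Plancherel identity to the QFT Plancherel theorem (Theorem~\ref{placherel-QFT}) by exploiting the relation $\mathcal{G}_{\varphi}\{f\}(\omega,b)=\mathcal{F}_{q}\{f(\cdot)\overline{\varphi(\cdot-b)}\}(\omega)$ recorded just above. For a fixed translation $b\in\mathbb{R}^2$, I would set $g_b(x):=f(x)\overline{\varphi(x-b)}$, so that $\mathcal{G}_{\varphi}\{f\}(\cdot,b)=\mathcal{F}_q(g_b)$. Applying Theorem~\ref{placherel-QFT} in the frequency variable $\omega$ then gives, for each fixed $b$,
$$\int_{\mathbb{R}^2}|\mathcal{G}_{\varphi}\{f\}(\omega,b)|^2\,d\omega=\int_{\mathbb{R}^2}|g_b(x)|^2\,dx=\int_{\mathbb{R}^2}|f(x)|^2\,|\varphi(x-b)|^2\,dx,$$
where the last equality uses the multiplicativity of the quaternion modulus, $|f(x)\overline{\varphi(x-b)}|=|f(x)|\,|\varphi(x-b)|$, together with $|\overline{q}|=|q|$.

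Next I would integrate this identity over $b\in\mathbb{R}^2$. The left-hand side then becomes $\|\mathcal{G}_{\varphi}\{f\}\|_2^2$, the full $L^2$-norm over the phase space $\mathbb{R}^2\times\mathbb{R}^2$. Because the integrand $|f(x)|^2\,|\varphi(x-b)|^2$ is nonnegative and measurable, Tonelli's theorem permits interchanging the order of integration in $x$ and $b$ without any integrability precondition, yielding
$$\|\mathcal{G}_{\varphi}\{f\}\|_2^2=\int_{\mathbb{R}^2}|f(x)|^2\left(\int_{\mathbb{R}^2}|\varphi(x-b)|^2\,db\right)dx.$$

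Finally, for each fixed $x$ the inner integral is computed by the change of variables $u=x-b$, invoking the translation invariance of Lebesgue measure, so that $\int_{\mathbb{R}^2}|\varphi(x-b)|^2\,db=\int_{\mathbb{R}^2}|\varphi(u)|^2\,du=\|\varphi\|_2^2$, a constant independent of $x$. Factoring this out leaves $\|\varphi\|_2^2\int_{\mathbb{R}^2}|f(x)|^2\,dx=\|f\|_2^2\,\|\varphi\|_2^2$, which is exactly the asserted identity \eqref{parseval1}. The only delicate point is the justification of the interchange of integrals; I expect this to be the main obstacle, though a mild one, since the nonnegativity of the integrand lets Tonelli's theorem do the work in place of Fubini. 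The same computation also shows that $g_b\in L^2(\mathbb{R}^2,\mathbb{H})$ for almost every $b$, which is what legitimizes the fibrewise application of the QFT Plancherel theorem in the first step.
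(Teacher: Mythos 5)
Your proof is correct and takes essentially the same route as the paper: both reduce the claim to the QFT Plancherel theorem (Theorem~\ref{placherel-QFT}) via the identity $\mathcal{G}_{\varphi}\{f\}(\cdot,b)=\mathcal{F}_q\{f(\cdot)\overline{\varphi(\cdot-b)}\}$, then use multiplicativity of the quaternion modulus, interchange of the $x$ and $b$ integrations, and translation invariance to factor the double integral into $\|f\|_2^2\|\varphi\|_2^2$. Your explicit appeals to Tonelli's theorem and to the a.e.-$b$ membership $g_b\in L^2(\mathbb{R}^2,\mathbb{H})$ merely spell out justifications the paper leaves implicit.
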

\begin{proof}
  We have
  \begin{eqnarray}
  % \nonumber % Remove numbering (before each equation)
    \|\mathcal{G}_{\varphi}\{f\}\|^{2}_{2} &=& \|\mathcal{F}_q(f(x)\overline{\varphi(x-b)})\|^{2}_{2} \nonumber \\
     &=& \|f(x)\overline{\varphi(x-b)}\|^{2}_{2} \label{pass1}\\
     &=&  \int_{\mathbb{R}^2}\int_{\mathbb{R}^2}|f(x)|^2|\varphi(x-b)|^2dxdb \nonumber\\
     &=& \int_{\mathbb{R}^2} |f(x)|^2 dx\int_{\mathbb{R}^2}|\varphi(t)|^2dt \label{pass2}\\
     &=& \|f\|^2_{2}\|\varphi\|^2_{2}\nonumber
  \end{eqnarray}
where in line \ref{pass1} we use the the Plancherel theorem's of QFT \ref{placherel-QFT}.
\end{proof}
\section{Uncertainty Principles For the GQFT}
In this section we demonstrate some versions of uncertainty principles and inequalities for the two sided quaternion windowed Fourier transform.
\subsection{Heisenberg Uncertainty principle}\leavevmode\par
Before proving the Heisenberg uncertainty principle for GQFT, first, we are giving a version of Heisenberg uncertainty for the QFT, that we will use it to demonstrate our result.
 \begin{theorem}\label{Heis-QFT}
   Let $f\in L^{2}(\mathbb{R}^2,\mathbb{H})$ be a quaternion-valued signal such that :\\
   $ x_k f, \frac{\partial}{\partial x_k}f\in L^{2}(\mathbb{R}^2,\mathbb{H})$ for $k=1,2$, then,
   \begin{equation}\label{Heisenberg-QFT}
   \left(\int_{\mathbb{R}^2} x^2_{k}|f(x)|^2 dx\right)^{\frac{1}{2}}\left(\int_{\mathbb{R}^2}\omega^2_k|\mathcal{F}_q(f)(\omega)|^2d\omega\right)^{\frac{1}{2}}\geq \frac{1}{4\pi}\|f\|^2_{2},
   \end{equation}
 \end{theorem}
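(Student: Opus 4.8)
The plan is to reduce \eqref{Heisenberg-QFT} to a one-dimensional-style Heisenberg estimate by combining a differentiation rule for the QFT, the Plancherel identity of Theorem \ref{placherel-QFT}, and the Cauchy--Schwarz inequality. First I would establish the key differentiation property: integrating by parts in the variable $x_k$ in the definition \eqref{QFT} of $\mathcal{F}_q$, and using that $e^{-2\pi i x_1\omega_1}$ commutes with $i$ while $e^{-2\pi j x_2\omega_2}$ commutes with $j$, one obtains
$$\mathcal{F}_q\Big(\tfrac{\partial}{\partial x_1}f\Big)(\omega)=2\pi i\,\omega_1\,\mathcal{F}_q(f)(\omega),\qquad \mathcal{F}_q\Big(\tfrac{\partial}{\partial x_2}f\Big)(\omega)=\mathcal{F}_q(f)(\omega)\,2\pi j\,\omega_2.$$
The imaginary unit sits on the left for $k=1$ and on the right for $k=2$ because of the fixed order of the exponentials in \eqref{QFT}, but since $|i|=|j|=1$ and the modulus is multiplicative on $\mathbb{H}$, in both cases $|\mathcal{F}_q(\partial_{x_k}f)(\omega)|=2\pi|\omega_k|\,|\mathcal{F}_q(f)(\omega)|$. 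The boundary terms in the integration by parts vanish under the standing hypotheses $x_kf,\partial_{x_k}f\in L^2(\mathbb{R}^2,\mathbb{H})$.

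Next I would convert the frequency-side integral into a norm of the derivative. Squaring the modulus identity, integrating in $\omega$, and invoking the Plancherel theorem for the QFT (Theorem \ref{placherel-QFT}) gives
$$\int_{\mathbb{R}^2}\omega_k^2\,|\mathcal{F}_q(f)(\omega)|^2\,d\omega=\frac{1}{4\pi^2}\int_{\mathbb{R}^2}|\mathcal{F}_q(\partial_{x_k}f)(\omega)|^2\,d\omega=\frac{1}{4\pi^2}\Big\|\frac{\partial f}{\partial x_k}\Big\|_2^2.$$
Hence the left-hand side of \eqref{Heisenberg-QFT} equals $\tfrac{1}{2\pi}\,\|x_kf\|_2\,\|\partial_{x_k}f\|_2$, and it remains only to prove the physical-side inequality $\|x_kf\|_2\,\|\partial_{x_k}f\|_2\ge\tfrac12\|f\|_2^2$.

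For this last step I would integrate by parts once more, now on the physical side. Since $\partial_{x_k}(|f|^2)=\partial_{x_k}(f\overline f)=(\partial_{x_k}f)\overline f+\overline{(\partial_{x_k}f)\overline f}=2\,\mathrm{Sc}\big[(\partial_{x_k}f)\overline f\big]$ and $\int_{\mathbb{R}^2}\partial_{x_k}(x_k|f|^2)\,dx=0$, one gets
$$\|f\|_2^2=\int_{\mathbb{R}^2}|f|^2\,dx=-\int_{\mathbb{R}^2}x_k\,\partial_{x_k}(|f|^2)\,dx=-2\int_{\mathbb{R}^2}\mathrm{Sc}\big[x_k(\partial_{x_k}f)\overline f\big]\,dx.$$
The crucial observation is that $q+\overline q=2\,\mathrm{Sc}(q)$ is always real, so the scalar part $\mathrm{Sc}\langle g,h\rangle=\int_{\mathbb{R}^2}\mathrm{Sc}(g\overline h)\,dx$ is a genuine real inner product whose induced norm is exactly $\|\cdot\|_2$; this sidesteps the non-commutativity entirely. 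Applying the ordinary real Cauchy--Schwarz inequality to this scalar-part inner product yields $\|f\|_2^2\le 2\,\|x_kf\|_2\,\|\partial_{x_k}f\|_2$, and multiplying through by $\tfrac{1}{2\pi}$ delivers \eqref{Heisenberg-QFT}.

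The main obstacle I anticipate is the careful bookkeeping of non-commutativity in the differentiation rule, namely keeping the imaginary units on their correct sides so that passing to moduli is justified, together with the routine but necessary verification that the boundary terms in both integrations by parts vanish. Once the differentiation property is correctly established, the remainder is the classical Heisenberg argument transported to the scalar-part inner product.
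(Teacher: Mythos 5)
Your proof is correct and follows essentially the same route as the paper's: both arguments convert the frequency moment into $\frac{1}{4\pi^{2}}\|\partial_{x_k}f\|_{2}^{2}$ (the paper invokes this as Lemma \ref{lemma1}, cited from \cite{you}, which you instead prove directly by integration by parts in \eqref{QFT}), and both then combine the physical-side identity $\int_{\mathbb{R}^2} x_k\,\partial_{x_k}(f\overline{f})\,dx=-\|f\|_{2}^{2}$ with a Cauchy--Schwarz estimate on $(\partial_{x_k}f)x_k\overline{f}+x_kf\,\partial_{x_k}\overline{f}$. Your explicit justification of that Cauchy--Schwarz step via the real scalar-part inner product is a sound way to handle the non-commutativity, a point the paper passes over without comment.
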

 To prove this theorem, we need the following result,
 \begin{lemma}\label{lemma1}
   Let $f\in L^{1}\cap L^{2}(\mathbb{R}^2,\mathbb{H})$. If $\frac{\partial}{\partial x_k}f$ exist and belong to
   $ L^{2}(\mathbb{R}^2,\mathbb{H})$ for $k=1,2$. Then
   \begin{equation}\label{lem-equ}
    (2\pi)^2\int_{\mathbb{R}^2}\omega_k^{2}|\mathcal{F}(f(x))(\omega)|^2 d\omega=\int_{\mathbb{R}^2}|\frac{\partial}{\partial x_k}f(x)|^2dx.
   \end{equation}
 \end{lemma}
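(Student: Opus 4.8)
The plan is to reduce the identity to the Plancherel theorem for the QFT (Theorem \ref{placherel-QFT}) by first establishing a differentiation rule: the QFT converts $\partial/\partial x_k$ into multiplication by $2\pi\omega_k$, carrying the imaginary unit appropriate to the side on which the kernel factor sits. First I would compute $\mathcal{F}_q(\partial f/\partial x_k)$ by integration by parts in the variable $x_k$. For $k=1$ the factor $e^{-2\pi i x_1\omega_1}$ stands to the \emph{left} of $f$, so differentiating the exponential gives $\mathcal{F}_q(\partial f/\partial x_1)(\omega)=2\pi i\,\omega_1\,\mathcal{F}_q(f)(\omega)$, with the unit $i$ on the left. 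For $k=2$ the factor $e^{-2\pi j x_2\omega_2}$ stands to the \emph{right} of $f$; using that $j$ commutes with $e^{-2\pi j x_2\omega_2}$, the same computation yields $\mathcal{F}_q(\partial f/\partial x_2)(\omega)=2\pi\,\omega_2\,\mathcal{F}_q(f)(\omega)\,j$, with the unit on the right.

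The second step is to pass to moduli. Since $|i|=|j|=1$ and the quaternionic modulus is multiplicative, the side on which the unit sits is irrelevant, and in both cases one obtains
$$\left|\mathcal{F}_q\left(\tfrac{\partial f}{\partial x_k}\right)(\omega)\right|^2=(2\pi)^2\,\omega_k^2\,|\mathcal{F}_q(f)(\omega)|^2.$$

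Finally I would integrate this identity over $\omega\in\mathbb{R}^2$ and apply the Plancherel theorem (Theorem \ref{placherel-QFT}) to the function $\partial f/\partial x_k\in L^2(\mathbb{R}^2,\mathbb{H})$:
$$\int_{\mathbb{R}^2}\left|\mathcal{F}_q\left(\tfrac{\partial f}{\partial x_k}\right)(\omega)\right|^2 d\omega=\left\|\mathcal{F}_q\left(\tfrac{\partial f}{\partial x_k}\right)\right\|_2^2=\left\|\tfrac{\partial f}{\partial x_k}\right\|_2^2=\int_{\mathbb{R}^2}\left|\tfrac{\partial f}{\partial x_k}(x)\right|^2 dx,$$
which, combined with the previous display, is exactly \eqref{lem-equ}.

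The main obstacle is the bookkeeping forced by non-commutativity in the integration-by-parts step: one must keep the kernel factors on their fixed sides and exploit the commutation of $j$ with $e^{-2\pi j x_2\omega_2}$, rather than naively sliding scalars across $f$. The remaining technical point is the vanishing of the boundary terms in the integration by parts, which is where the hypotheses $f\in L^1\cap L^2(\mathbb{R}^2,\mathbb{H})$ and $\partial f/\partial x_k\in L^2(\mathbb{R}^2,\mathbb{H})$ enter, guaranteeing sufficient decay of $f$ at infinity in the $x_k$ direction. Once these are granted, the modulus computation and the appeal to Plancherel are routine.
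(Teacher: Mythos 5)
Your proposal is correct in its main line, but there is nothing in the paper to compare it against: the paper's entire ``proof'' of this lemma is the citation \emph{See \cite{you}}, i.e.\ it defers to El Haoui--Fahlaoui (Mediterr.\ J.\ Math.\ 2017). Your argument is the standard one that such references use, and you handle the one genuinely quaternionic subtlety correctly: because the two-sided kernel splits as $e^{-2\pi i x_1\omega_1}(\cdot)e^{-2\pi j x_2\omega_2}$, integration by parts gives $\mathcal{F}_q(\partial f/\partial x_1)=2\pi i\,\omega_1\,\mathcal{F}_q(f)$ with $i$ on the left but $\mathcal{F}_q(\partial f/\partial x_2)=\mathcal{F}_q(f)\,2\pi\omega_2\,j$ with $j$ on the right, and only the multiplicativity of the modulus ($|pq|=|p||q|$, $|i|=|j|=1$) makes the two cases give the same squared-modulus identity. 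Combining with Theorem \ref{placherel-QFT} applied to $\partial f/\partial x_k$ then yields \eqref{lem-equ}.

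Two technical points in your sketch deserve repair. First, writing $\mathcal{F}_q(\partial f/\partial x_k)$ as an absolutely convergent integral requires $\partial f/\partial x_k\in L^1$, which is not among the hypotheses; for a derivative merely in $L^2$ the QFT is defined by $L^2$-extension, so the integration by parts should be performed on a dense class (Schwartz or $C_c^\infty$ functions) and the resulting identity extended by continuity of both sides in the relevant norms. Second, the vanishing of the boundary terms does not come from any assumed ``decay of $f$ at infinity'' --- no pointwise decay is hypothesized --- but from the one-dimensional Sobolev fact that $g,g'\in L^2(\mathbb{R})$ forces $g(t)\to 0$ as $|t|\to\infty$, applied (via Fubini) to $x_k\mapsto f(x)$ for almost every value of the other variable. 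With these two repairs your proof is complete and self-contained, which is more than the paper itself provides.
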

\begin{proof}
  See \cite{you}.
\end{proof}
 We are going to prove the first theorem \ref{Heis-QFT}.
 \begin{proof}
   For $k\in{1,2}$. First, by applying lemma \ref{lemma1} and Plancherel's theorem \ref{parseval1}, we obtain

   % \nonumber % Remove numbering (before each equation)
     $$\frac{\int_{\mathbb{R}^2}x_k^2|f(x)|^2dx\int_{\mathbb{R}^2}\omega_k^2|\mathcal{F}_q(f)(\omega)|^2d^2\omega}{\int_{\mathbb{R}^2}|f(x)|^2dx\int_{\mathbb{R}^2}|\mathcal{F}_q(f)(\omega)|^2d\omega} =$$
      \begin{eqnarray*}
      &=&\frac{\frac{1}{(2\pi)^2}\int_{\mathbb{R}^2}x_k^2|f(x)|^2dx\int_{\mathbb{R}^2}|\frac{\partial}{\partial x_k}f(x)|^2d\omega}{\int_{\mathbb{R}^2}|f(x)|^2dx\int_{\mathbb{R}^2}|\mathcal{F}_q(f)(\omega)|^2d\omega}  \\
      &=& \frac{\frac{1}{(2\pi)^2}\int_{\mathbb{R}^2}x_k^2|f(x)|^2dx\int_{\mathbb{R}^2}|\frac{\partial}{\partial x_k}f(x)|^2d\omega}{(\int_{\mathbb{R}^2}|f(x)|^2dx)^2} \\
    &\geq& \frac{1}{16\pi^2}\frac{(\int_{\mathbb{R}^2}(\frac{\partial}{\partial x_k}f(x)x_k\overline{f(x)}+x_kf(x)\frac{\partial}{\partial x_k}\overline{f(x)})dx)^2}{\|f(x)\|^4_{2}} \\
      &=& \frac{1}{16\pi^2}\frac{(\int_{\mathbb{R}^2}x_k\frac{\partial}{\partial x_k}(f(x)\overline{f(x)})dx)^2}{\|f(x)\|^4_{2}}
   \end{eqnarray*}
   Second, using integration par parts, we further get,
   \begin{eqnarray*}
   % \nonumber % Remove numbering (before each equation)
     &=&  \frac{1}{16\pi^2}\frac{([\int_{\mathbb{R}}x_k|f(x)|^2dx_l]_{x_k=-\infty}^{x_k=+\infty}-\int_{\mathbb{R}^2} \|f(x)\|^2dx)^2 }{\|f(x)\|^4_{2}} \\
      &=&  \frac{1}{16\pi^2}
   \end{eqnarray*}
   then,
   \begin{equation*}
   \left(\int_{\mathbb{R}^2} x^2_{k}|f(x)|^2 dx\right)^{\frac{1}{2}}\left(\int_{\mathbb{R}^2}\omega^2_k|\mathcal{F}_q(f)(\omega)|^2d\omega\right)^{\frac{1}{2}}\geq \frac{1}{4\pi}\|f\|^2_{2}
   \end{equation*}
 \end{proof}

  Applying the Plancherel theorem for the QFT \ref{placherel-QFT} to the right-hand side of \ref{Heis-QFT},  we get the following corollary,
  \begin{corollary}\label{coro1}
  Under the above assumptions, we have
  \begin{equation}\label{ineq-QFT}
  \left(\int_{\mathbb{R}^2} x^2_{k}|\mathcal{F}_q^{-1}\{\mathcal{F}_q(f)\}(x)|^2 dx\right)^{\frac{1}{2}}\left(\int_{\mathbb{R}^2}\omega^2_k|\mathcal{F}_q(f)(\omega)|^2d\omega\right)^{\frac{1}{2}}\geq \frac{1}{4\pi}\|\mathcal{F}_q(f)\|^2_{2}
  \end{equation}

\end{corollary}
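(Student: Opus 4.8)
The plan is to obtain this corollary as an essentially immediate consequence of Theorem \ref{Heis-QFT}, performing two substitutions that are each licensed by a result established earlier in the paper. The starting point is the inequality \ref{Heisenberg-QFT}, which already holds for every $f$ satisfying the stated hypotheses; the only work is to re-express its two sides in terms of $\mathcal{F}_q(f)$.

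First I would rewrite the left-most factor using the inversion formula. By Theorem \ref{QFT-inversion}, since $f,\mathcal{F}_q(f)\in L^{2}(\mathbb{R}^2,\mathbb{H})$, we have the pointwise identity $f(x)=\mathcal{F}_q^{-1}\{\mathcal{F}_q(f)\}(x)$. Substituting this into the factor $\int_{\mathbb{R}^2}x_k^2|f(x)|^2\,dx$ leaves its value unchanged but displays it in the form appearing in \ref{ineq-QFT}; no new estimate is needed here, only a renaming of $f$ through its reconstruction from $\mathcal{F}_q(f)$.

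Second I would treat the right-hand side. Applying the Plancherel theorem for the QFT, Theorem \ref{placherel-QFT}, gives $\|f\|_2=\|\mathcal{F}_q(f)\|_2$, hence $\tfrac{1}{4\pi}\|f\|_2^2=\tfrac{1}{4\pi}\|\mathcal{F}_q(f)\|_2^2$. Combining this with the substitution from the previous step transforms \ref{Heisenberg-QFT} verbatim into the asserted inequality \ref{ineq-QFT}.

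I do not expect a genuine obstacle: the whole argument is a substitution of identities, so there is no inequality to sharpen and no limiting or density argument to justify. The only point requiring a moment of care is bookkeeping of the hypotheses, namely that the ``above assumptions'' carried over from Theorem \ref{Heis-QFT} (that $x_k f$ and $\tfrac{\partial}{\partial x_k}f$ lie in $L^{2}(\mathbb{R}^2,\mathbb{H})$ for $k=1,2$) simultaneously guarantee $f,\mathcal{F}_q(f)\in L^{2}(\mathbb{R}^2,\mathbb{H})$, so that both the inversion formula and the Plancherel identity are applicable. Once this is noted, the chain of equalities closes the proof.
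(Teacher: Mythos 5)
Your proposal is correct and matches the paper's own (very brief) argument: the paper likewise obtains the corollary by applying the Plancherel theorem \ref{placherel-QFT} to the right-hand side of \ref{Heisenberg-QFT}, with the inversion theorem \ref{QFT-inversion} implicitly justifying the rewriting of $f$ as $\mathcal{F}_q^{-1}\{\mathcal{F}_q(f)\}$ in the left factor. Your version is if anything more careful, since you make the inversion step and the hypothesis bookkeeping explicit where the paper leaves them tacit.
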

Now, we are going to  establish a generalization of the Heisenberg type uncertainty principle for the GQFT.
\begin{theorem}[Heisenberg for GQFT]\label{th-Heisenberg}
  Let $\varphi  \in L^{2}(\mathbb{R}^2,\mathbb{H})$ be a quaternion window function and let $\mathcal{G}_{\varphi}\{f\}\in L^{2}(\mathbb{R}^2,\mathbb{H})$ be the GQFT of $f$ such that $\omega_k\mathcal{G}_{\varphi}\{f\}\in L^{2}(\mathbb{R}^2,\mathbb{H}),~~ k=1,2.$ Then for every $f\in L^{2}(\mathbb{R}^2,\mathbb{H})$ we have the following inequality
  \begin{equation}\label{Heis-GQFT}
  \left(\int_{\mathbb{R}^2} x^2_{k}|f(x)|^2 dx\right)^{\frac{1}{2}}\left(\int_{\mathbb{R}^2}\int_{\mathbb{R}^2}\omega^2_k|\mathcal{G}_{\varphi}\{f\}(\omega,b)|^2d\omega db\right)^{\frac{1}{2}}\geq \frac{1}{4\pi}\|f\|^2_{2} \| \varphi\|_{2}
  \end{equation}
\end{theorem}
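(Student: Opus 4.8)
The plan is to reduce the statement to the already-established Heisenberg inequality for the QFT (Theorem \ref{Heis-QFT}) by exploiting the factorization $\mathcal{G}_{\varphi}\{f\}(\omega,b)=\mathcal{F}_q\{f(\cdot)\overline{\varphi(\cdot-b)}\}(\omega)$ recorded just before the inversion formula. First I would freeze the translation parameter $b$ and introduce $g_b(x):=f(x)\overline{\varphi(x-b)}$, so that $\mathcal{G}_{\varphi}\{f\}(\cdot,b)=\mathcal{F}_q(g_b)$. Applying inequality \ref{Heisenberg-QFT} to $g_b$ gives, for each fixed $b$ and each $k\in\{1,2\}$,
\[
\left(\int_{\mathbb{R}^2} x_k^2 |f(x)|^2|\varphi(x-b)|^2\,dx\right)^{\frac{1}{2}}\left(\int_{\mathbb{R}^2}\omega_k^2|\mathcal{G}_{\varphi}\{f\}(\omega,b)|^2\,d\omega\right)^{\frac{1}{2}}\geq \frac{1}{4\pi}\int_{\mathbb{R}^2}|f(x)|^2|\varphi(x-b)|^2\,dx,
\]
since $\|g_b\|_2^2=\int_{\mathbb{R}^2}|f(x)|^2|\varphi(x-b)|^2\,dx$ and the second factor on the left is exactly the inner frequency integral of the target statement.

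Next I would integrate this pointwise (in $b$) inequality over $b\in\mathbb{R}^2$. Writing $A(b)$ and $B(b)$ for the two squared factors on the left and $C(b)$ for the quantity on the right, the integrated inequality reads $\int_{\mathbb{R}^2} \sqrt{A(b)}\sqrt{B(b)}\,db\geq \frac{1}{4\pi}\int_{\mathbb{R}^2} C(b)\,db$. The Cauchy--Schwarz inequality in the variable $b$ then yields $\int_{\mathbb{R}^2}\sqrt{A(b)}\sqrt{B(b)}\,db \leq \left(\int_{\mathbb{R}^2} A(b)\,db\right)^{\frac{1}{2}}\left(\int_{\mathbb{R}^2} B(b)\,db\right)^{\frac{1}{2}}$, which chains with the previous line to give $\left(\int_{\mathbb{R}^2} A\,db\right)^{\frac{1}{2}}\left(\int_{\mathbb{R}^2} B\,db\right)^{\frac{1}{2}}\geq \frac{1}{4\pi}\int_{\mathbb{R}^2} C\,db$.

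It then remains to evaluate the three $b$-integrals by Fubini's theorem and the translation invariance of Lebesgue measure (substituting $t=x-b$ in the inner integral). One finds $\int_{\mathbb{R}^2} A(b)\,db=\|\varphi\|_2^2\int_{\mathbb{R}^2} x_k^2|f(x)|^2\,dx$ and $\int_{\mathbb{R}^2} C(b)\,db=\|\varphi\|_2^2\|f\|_2^2$, this last identity being precisely the computation underlying the Plancherel formula \ref{parseval1}, while $\int_{\mathbb{R}^2} B(b)\,db$ is the double frequency integral appearing in \ref{Heis-GQFT}. Substituting these values and cancelling one factor of $\|\varphi\|_2$ (legitimate since $\varphi$ is a nonzero window) produces exactly the asserted inequality.

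The point I expect to require the most care is the Cauchy--Schwarz step: it must be applied in the direction that makes the product of the integrated factors \emph{dominate} the integral of the product, and one must check that the three iterated integrals are finite so that Fubini--Tonelli is valid. A secondary technical issue is that Theorem \ref{Heis-QFT} is being invoked with $g_b$ in place of $f$, which demands $x_k g_b,\ \partial_{x_k} g_b\in L^2(\mathbb{R}^2,\mathbb{H})$; this should follow from the standing hypotheses on $f$ and $\varphi$ together with the Leibniz rule, and it is in fact the step where those regularity assumptions are genuinely consumed.
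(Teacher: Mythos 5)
Your proposal is correct and follows essentially the same route as the paper's own proof: fix $b$, apply the QFT Heisenberg inequality (Theorem \ref{Heis-QFT}) to $g_b(x)=f(x)\overline{\varphi(x-b)}$ via the factorization $\mathcal{G}_{\varphi}\{f\}(\cdot,b)=\mathcal{F}_q(g_b)$, integrate in $b$, apply Cauchy--Schwarz in exactly the direction you flag, and evaluate the resulting $b$-integrals by Fubini and translation invariance (which the paper packages as Lemma \ref{lemme2} and the Plancherel theorem \ref{Parseval-GQFT}) before cancelling one factor of $\|\varphi\|_{2}$. The only cosmetic difference is that the paper keeps its right-hand side as $\frac{1}{4\pi}\int_{\mathbb{R}^2}\int_{\mathbb{R}^2}|\mathcal{G}_{\varphi}\{f\}(\omega,b)|^2\,d\omega\,db$ and then invokes Theorem \ref{Parseval-GQFT}, whereas you evaluate $\int C(b)\,db$ directly; these amount to the same computation.
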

In order to prove this theorem, we need to introduce the following lemmas. The first lemma called the Cauchy-Schwartz inequality,
\begin{lemma}\label{cauchy}
  Let $f,g\in L^{2}(\mathbb{R}^2,\mathbb{H}) $ be two quaternion on valued functions. Then the Cauchy-Schwartz inequality takes the form
  $$|\int_{\mathbb{R}^2} \overline{f(x)}g(x) dx|^2\leq \int_{\mathbb{R}^2} |f(x)|^2 dx  \int_{\mathbb{R}^2} |g(x)|^2 dx$$
\end{lemma}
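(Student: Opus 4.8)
The plan is to deduce the quaternionic Cauchy--Schwarz inequality from the classical scalar one, the only genuine difficulty being the non-commutativity of $\mathbb{H}$, which blocks the usual ``complete the square with a scalar parameter'' argument from closing directly (the cross terms $\int\overline{f}g$ and $\int\overline{g}f$ are quaternion-valued and do not simply combine). I would begin by setting $I=\int_{\mathbb{R}^2}\overline{f(x)}g(x)\,dx\in\mathbb{H}$. If $I=0$ the inequality is trivial, so I assume $I\neq 0$ and put $u=I/|I|$, a unit quaternion ($|u|=1$) with $I=|I|\,u$.

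The key step is to \emph{rotate} $f$ so as to make the integral real-valued. Since $\overline{u}u=|u|^2=1$ and $|I|$ is a real scalar, one has $\overline{u}\,I=|I|$. Because $u$ is a constant quaternion it may be pulled inside the integral; writing $h:=fu$ and using $\overline{fu}=\overline{u}\,\overline{f}$ gives
$$|I|=\overline{u}\int_{\mathbb{R}^2}\overline{f(x)}g(x)\,dx=\int_{\mathbb{R}^2}\overline{f(x)u}\,g(x)\,dx=\int_{\mathbb{R}^2}\overline{h(x)}g(x)\,dx.$$
Moreover $|h(x)|=|f(x)|\,|u|=|f(x)|$, so that $\|h\|_2=\|f\|_2$.

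Now $|I|$ is real, hence equal to the scalar part of the integral. Writing $\mathrm{Sc}(q)=\tfrac12(q+\overline{q})$ and recalling that $\mathrm{Sc}(\overline{p}q)=p_1q_1+p_2q_2+p_3q_3+p_4q_4$ is exactly the Euclidean inner product of $p,q$ regarded as vectors in $\mathbb{R}^4$, I would estimate
$$|I|=\int_{\mathbb{R}^2}\mathrm{Sc}\big(\overline{h(x)}g(x)\big)\,dx\leq\int_{\mathbb{R}^2}|h(x)|\,|g(x)|\,dx,$$
where the pointwise bound $\mathrm{Sc}(\overline{h}g)\le|h|\,|g|$ is just the ordinary Cauchy--Schwarz inequality in $\mathbb{R}^4$. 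Applying the classical scalar Cauchy--Schwarz inequality to the last integral yields $|I|\le\|h\|_2\|g\|_2=\|f\|_2\|g\|_2$, and squaring gives the asserted inequality.

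I expect the main obstacle to be precisely the treatment of the non-commutative cross terms; the rotation $f\mapsto fu$ is what circumvents it, after which everything reduces to real-valued estimates. An equivalent route, once $\int_{\mathbb{R}^2}\overline{h}g$ has been made real, would be to expand $0\le\int_{\mathbb{R}^2}|h(x)-t\,g(x)|^2\,dx=\|h\|_2^2-2t|I|+t^2\|g\|_2^2$ for real $t$ and require the discriminant to be nonpositive; however, I would keep the scalar-part reduction as the primary argument, since it is the most transparent and makes the role of the Euclidean structure on $\mathbb{R}^4\cong\mathbb{H}$ explicit.
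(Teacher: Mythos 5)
Your proof is correct, and there is in fact nothing in the paper to compare it against: the paper states this lemma without any proof at all (it is simply invoked as a known tool in the proof of the Heisenberg inequality for the GQFT), so your argument supplies a proof the paper omits rather than duplicating or deviating from an existing one. Every step where non-commutativity could cause trouble checks out. Setting $I=\int_{\mathbb{R}^2}\overline{f(x)}g(x)\,dx$ and right-multiplying $f$ by the unit quaternion $u=I/|I|$ is the correct quaternionic analogue of the complex rotation $e^{-i\theta}$; left multiplication by the constant $\overline{u}$ does commute with componentwise integration; the identities $\overline{fu}=\overline{u}\,\overline{f}$ and $|fu|=|f|\,|u|=|f|$ follow from the anti-involution and multiplicativity of the modulus recalled in the paper's introduction; and once $\int_{\mathbb{R}^2}\overline{h(x)}g(x)\,dx=|I|$ is real, passing to the scalar part, bounding $\mathrm{Sc}\bigl(\overline{h}g\bigr)\leq|h|\,|g|$ pointwise by Euclidean Cauchy--Schwarz in $\mathbb{R}^4$, and finishing with the classical scalar $L^2$ Cauchy--Schwarz inequality closes the argument. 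Two minor points you could make explicit: first, $\overline{f}g\in L^{1}(\mathbb{R}^2,\mathbb{H})$, so $I$ is well defined, by the same pointwise bound $|\overline{f}g|=|f|\,|g|$ together with scalar Cauchy--Schwarz; second, your alternative discriminant route is indeed valid after the rotation, because the cross term $\int_{\mathbb{R}^2}\bigl(\overline{h}g+\overline{g}h\bigr)\,dx$ then equals the real number $2|I|$, which is exactly what the rotation was designed to achieve.
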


\begin{lemma}
  Under the assumptions of theorem \ref{th-Heisenberg}, we have
  \begin{equation}\label{lemme2}
   \|\varphi\|^2_{2}\int_{\mathbb{R}^2} x^2_{k}|f(x)|^2 dx=\int_{\mathbb{R}^2}\int_{\mathbb{R}^2}x_k^2|\mathcal{F}_q^{-1}\{\mathcal{G}_{\varphi}\{f\}(\omega,b)\}(x)|^2dxdb
  \end{equation}
  for $k=1,2$.
\end{lemma}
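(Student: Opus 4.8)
The plan is to reduce the identity to the fundamental relation between the GQFT and the two-sided QFT, namely $\mathcal{G}_{\varphi}\{f\}(\omega,b)=\mathcal{F}_{q}\{f(x)\overline{\varphi(x-b)}\}(\omega)$ established in \ref{equa2}. Applying the inverse two-sided QFT (Theorem \ref{QFT-inversion}) in the variable $\omega$ to both sides, exactly as in \ref{equ3}, yields the pointwise reconstruction
$$\mathcal{F}_q^{-1}\{\mathcal{G}_{\varphi}\{f\}(\omega,b)\}(x)=f(x)\overline{\varphi(x-b)}.$$

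First I would substitute this expression into the right-hand side of \ref{lemme2}. Taking the squared modulus and invoking the multiplicativity of the quaternion modulus $|pq|=|p||q|$, I obtain $|f(x)\overline{\varphi(x-b)}|^2=|f(x)|^2\,|\varphi(x-b)|^2$, so that the double integral becomes $\int_{\mathbb{R}^2}\int_{\mathbb{R}^2}x_k^2\,|f(x)|^2\,|\varphi(x-b)|^2\,dx\,db$.

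Next, granting joint integrability of the integrand, I would apply Fubini's theorem to interchange the order of integration and pull the factor $x_k^2|f(x)|^2$ outside the inner integral over $b$. The translation-invariant substitution $t=x-b$ shows that $\int_{\mathbb{R}^2}|\varphi(x-b)|^2\,db=\int_{\mathbb{R}^2}|\varphi(t)|^2\,dt=\|\varphi\|^2_{2}$, a constant independent of $x$. Factoring this out gives precisely $\|\varphi\|^2_{2}\int_{\mathbb{R}^2}x_k^2|f(x)|^2\,dx$, which is the claimed identity.

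The main obstacle, and really the only non-formal point, is justifying the use of Fubini's theorem: one must check that the nonnegative integrand $x_k^2|f(x)|^2|\varphi(x-b)|^2$ is integrable on $\mathbb{R}^2\times\mathbb{R}^2$. This is where the standing hypotheses enter, since $x_kf\in L^{2}(\mathbb{R}^2,\mathbb{H})$ and $\varphi\in L^{2}(\mathbb{R}^2,\mathbb{H})$ guarantee that the factored double integral $\big(\int_{\mathbb{R}^2} x_k^2|f(x)|^2\,dx\big)\big(\int_{\mathbb{R}^2}|\varphi(t)|^2\,dt\big)$ is finite; by Tonelli's theorem the interchange is then legitimate. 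Once this is granted, the remaining steps are immediate consequences of the QFT inversion formula and the multiplicativity of the modulus.
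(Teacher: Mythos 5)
Your proof is correct and follows essentially the same route as the paper's: both rest on the identification $\mathcal{F}_q^{-1}\{\mathcal{G}_{\varphi}\{f\}(\omega,b)\}(x)=f(x)\overline{\varphi(x-b)}$ from the inversion formula, the multiplicativity $|f(x)\overline{\varphi(x-b)}|^2=|f(x)|^2|\varphi(x-b)|^2$, and the translation-invariant substitution giving $\int_{\mathbb{R}^2}|\varphi(x-b)|^2\,db=\|\varphi\|^2_{2}$; the paper merely runs the chain of equalities in the opposite direction (left side to right side). Your explicit Tonelli justification for interchanging the integrals is a point the paper passes over silently, but it does not change the argument.
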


\begin{proof}
  Applying elementary properties of quaternion, we get
  \begin{eqnarray*}
  % \nonumber % Remove numbering (before each equation)
     \|\varphi\|^2_{2}\int_{\mathbb{R}^2} x^2_{k}|f(x)|^2 dx &=&\int_{\mathbb{R}^2} x^2_{k}|f(x)|^2 dx \int_{\mathbb{R}^2}|\varphi(x-b)|^2db\\
     &=& \int_{\mathbb{R}^2}\int_{\mathbb{R}^2} x^2_{k}|f(x)|^2  |\varphi(x-b)|^2dx db \\
     &=& \int_{\mathbb{R}^2}\int_{\mathbb{R}^2} x^2_{k}|f(x)\overline{\varphi(x-b)}|^2dx db \\
     &=& \int_{\mathbb{R}^2}\int_{\mathbb{R}^2} x^2_{k}|\mathcal{F}^{-1}(\mathcal{G}_{\varphi}\{f\}(\omega,b))(x)|^2dx db
  \end{eqnarray*}
 \end{proof}
  Now, we are going to prove the theorem \ref{Heis-GQFT}.
 \begin{proof}(of theorem \ref{Heis-GQFT})
  Replacing the QFT of $f$ by the GQFT of the left hand side of \ref{ineq-QFT} in corollary \ref{coro1}, we obtain
 \begin{equation}\label{eneq1}
  \left(\int_{\mathbb{R}^2} x^2_{k}|\mathcal{F}_q^{-1}\{\mathcal{G}_{\varphi}\{f\}(\omega,b)\}(x)|^2 dx\right)\left(\int_{\mathbb{R}^2}\omega^2_k|\mathcal{G}_{\varphi}\{f\}(\omega,b)|^2d\omega\right)\geq \frac{1}{16\pi^2}\left(\int_{\mathbb{R}^2}|G_\varphi f(\omega,b)|^2d\omega\right)^2
  \end{equation}
  we have,
 $$ \mathcal{F}^{-1}(\mathcal{G}_{\varphi}\{f\}(\omega,b))(x)=f(x)\overline{\varphi(x-b)}$$
  Taking the square root on both sides of \ref{eneq1} and integrating both sides with respect to $db$ we get
  \begin{equation}\label{eneq2}
   \int_{\mathbb{R}^2}  \left(\int_{\mathbb{R}^2} x^2_{k}|\mathcal{F}_q^{-1}\{\mathcal{G}_{\varphi}\{f\}(\omega,b)\}(x)|^2 dx\right)^{\frac{1}{2}} \left(\int_{\mathbb{R}^2}\omega^2_k|\mathcal{G}_{\varphi}\{f\}(\omega,b)|^2d\omega\right)^{\frac{1}{2}}db\geq \frac{1}{4\pi}\int_{\mathbb{R}^2}\int_{\mathbb{R}^2}|\mathcal{G}_{\varphi}\{f\}(\omega,b)|^2d\omega db
  \end{equation}

  Applying the  Cauchy-Schwartz inequality \ref{cauchy} to the left-hand side of \ref{eneq2} we obtain
  \begin{equation}\label{eneq3}
    \left(\int_{\mathbb{R}^2}\int_{\mathbb{R}^2} x^2_{k}|\mathcal{F}_q^{-1}\{\mathcal{G}_{\varphi}\{f\}(\omega,b)\}(x)|^2 dxdb\right)^{\frac{1}{2}}\left(\int_{\mathbb{R}^2}\int_{\mathbb{R}^2}\omega^2_k\mathcal{G}_{\varphi}\{f\}(\omega,b)|^2d\omega db\right)^{\frac{1}{2}}\geq \frac{1}{4\pi}\int_{\mathbb{R}^2}\int_{\mathbb{R}^2}|\mathcal{G}_{\varphi}\{f\}(\omega,b)|^2d\omega db
  \end{equation}

  Using lemma \ref{lemme2} into the second term on the left-hand side of \ref{eneq3}, and use the Plancherel's formula \ref{Parseval-GQFT} into the right-hand side of \ref{eneq3}, we obtain that

  \begin{equation}\label{eneq5}
    \left( \| \varphi\|^{2}_{2}\int_{\mathbb{R}^2} x^2_{k}| f(x)|^2 dx\right)^{\frac{1}{2}} \left(\int_{\mathbb{R}^2}\int_{\mathbb{R}^2}\omega^2_k|\mathcal{G}_{\varphi}\{f\}(\omega,b)|^2d\omega db\right)^{\frac{1}{2}}\geq \frac{1}{4\pi}\| f\|^2_{2}
    \| \varphi\|^{2}_{2}
  \end{equation}
 Now, simplifying  both sides of \ref{eneq5} by $ \| \varphi\|_{2}$, we get our result.

 \end{proof}

\section{Uncertainty Principles}
\begin{definition}
  A couple $\alpha=(\alpha_1,\alpha_2)$ of non negative integers is called a multi-index. One denotes
  $$|\alpha|=\alpha_1+\alpha_2 ~~ and ~~ \alpha!=\alpha_1!\alpha_2!$$
  and, for $x\in \mathbb{R}^2$
  $$x^{\alpha}=x_1^{\alpha_1}x_2^{\alpha_2}$$
  Derivatives are conveniently expressed by multi-indices
 $$ \partial^{\alpha}=\frac{\partial^{|\alpha|}}{\partial x_1^{\alpha_1}\partial x_2^{\alpha_2}}$$

\end{definition}
Next, we obtain the Schwartz space as (\cite{Kou})
$$\mathcal{S}(\mathbb{R}^2,\mathbb{H})=\{f\in C^{\infty}(\mathbb{R}^2,\mathbb{H}): sup_{x\in\mathbb{R}^2}(1+|x|^k)|\partial^{\alpha}f(x)|<\infty\},$$
where  $C^{\infty}(\mathbb{R}^2,\mathbb{H})$ is the set of smooth function from $\mathbb{R}^2$ to $\mathbb{H}$.

we have the logarithmic uncertainty principle for the QFT \cite{Chen-kou} as follows

\begin{theorem}[QFT logarithmic uncertainty principle ]\label{theo-log1}
 For $f\in \mathcal{S}(\mathbb{R}^2,\mathbb{H})$, we have
 \begin{equation}\label{log3}
   \int_{\mathbb{R}^2}\!ln|x||f(x)|^2dx +\int_{\mathbb{R}^2}\!ln|\omega||\mathcal{F}_Q\{f\}(\omega)|^2d\omega\geq \left(\frac{\Gamma^{'}(t)}{\Gamma(t)}-ln\pi\right)\int_{\mathbb{R}^2}\!|f(x)|^2dx,
  \end{equation}
 Where $\Gamma^{'}(t)=\left( \frac{d}{dt}\right)$ and $\Gamma(t)$ is Gamma function.\\
\end{theorem}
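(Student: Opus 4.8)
The plan is to obtain \eqref{log3} as the derivative, at the exponent $\lambda=0$, of a sharp Pitt-type inequality for the QFT. This is Beckner's classical strategy: the logarithmic uncertainty principle is the infinitesimal form of a one-parameter family of weighted $L^2$ inequalities, read off by differentiating in the weight exponent at the point where both sides coincide.

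First I would establish Pitt's inequality for the two-sided QFT,
\begin{equation*}
\int_{\mathbb{R}^2}|\omega|^{-\lambda}|\mathcal{F}_q\{f\}(\omega)|^2\,d\omega \le C_\lambda \int_{\mathbb{R}^2}|x|^{\lambda}|f(x)|^2\,dx, \qquad 0\le\lambda<2,
\end{equation*}
with the sharp constant $C_\lambda=\pi^{\lambda}\left[\Gamma\!\left(\tfrac{2-\lambda}{4}\right)\big/\Gamma\!\left(\tfrac{2+\lambda}{4}\right)\right]^2$. To transfer Beckner's Euclidean result to the quaternionic setting I would use the split $f=g_1+g_2 j$ with $g_1,g_2$ complex ($\mathbb{C}_i$-valued), for which $|f|^2=|g_1|^2+|g_2|^2$ and, after expanding the right exponential $e^{-2\pi j x_2\omega_2}$, the modulus $|\mathcal{F}_q\{f\}(\omega)|^2$ is expressed through ordinary Fourier transforms of the real components on $\mathbb{R}^2$. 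Pitt's inequality then follows from the classical sharp inequality applied to these components, the scalar weights $|x|^{\lambda}$ and $|\omega|^{-\lambda}$ being insensitive to the quaternionic structure.

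Next I would set
\begin{equation*}
\Phi(\lambda)=C_\lambda\int_{\mathbb{R}^2}|x|^{\lambda}|f(x)|^2\,dx-\int_{\mathbb{R}^2}|\omega|^{-\lambda}|\mathcal{F}_q\{f\}(\omega)|^2\,d\omega\ \ge\ 0 .
\end{equation*}
Since $C_0=1$ and, by the Plancherel identity (Theorem \ref{placherel-QFT}), $\int_{\mathbb{R}^2}|f|^2=\int_{\mathbb{R}^2}|\mathcal{F}_q\{f\}|^2=\|f\|_2^2$, we have $\Phi(0)=0$. As $\Phi\ge 0$ on $[0,2)$ with a zero at the left endpoint, it follows that $\Phi'(0^+)\ge 0$. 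Differentiating under the integral sign --- legitimate because $f\in\mathcal{S}(\mathbb{R}^2,\mathbb{H})$ forces rapid decay of $f$ and $\mathcal{F}_q\{f\}$, which dominates the logarithmic factors coming from $\tfrac{d}{d\lambda}|x|^{\pm\lambda}=\pm\ln|x|\,|x|^{\pm\lambda}$ --- yields
\begin{equation*}
\Phi'(0)=C_0'\,\|f\|_2^2+\int_{\mathbb{R}^2}\ln|x|\,|f(x)|^2\,dx+\int_{\mathbb{R}^2}\ln|\omega|\,|\mathcal{F}_q\{f\}(\omega)|^2\,d\omega\ \ge\ 0 .
\end{equation*}

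Finally I would evaluate $C_0'$ by logarithmic differentiation: from $\ln C_\lambda=\lambda\ln\pi+2\ln\Gamma\!\left(\tfrac{2-\lambda}{4}\right)-2\ln\Gamma\!\left(\tfrac{2+\lambda}{4}\right)$ and $C_0=1$ one obtains $C_0'=\ln\pi-\tfrac12\psi(\tfrac12)-\tfrac12\psi(\tfrac12)=\ln\pi-\tfrac{\Gamma'(1/2)}{\Gamma(1/2)}$, where $\psi=\Gamma'/\Gamma$. Substituting $-C_0'=\tfrac{\Gamma'(1/2)}{\Gamma(1/2)}-\ln\pi$ into $\Phi'(0)\ge0$ gives \eqref{log3} with $t=\tfrac12$. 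I expect the main obstacle to be the first step: proving the sharp Pitt inequality for the QFT, that is, controlling the non-commutative kernel through the component decomposition so that the classical sharp constant is preserved, together with the justification of differentiation under the integral. Once these are in hand, the computation of $C_0'$ and the final substitution are routine.
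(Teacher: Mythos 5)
The paper offers no proof of this theorem to compare against: it is stated as a known result, imported through the citation to the work of Li-Ping Chen, Kit Ian Kou and Ming-Sheng Liu (the bibliography entry \cite{Li-ping}, on Pitt's inequality and the uncertainty principle for the QFT), and then used as a black box in the GQFT section. Your proposal is, in substance, a reconstruction of the argument in that cited reference: sharp Pitt's inequality for the two-sided QFT, followed by Beckner's differentiation-in-the-exponent argument at $\lambda=0$. Your endgame is correct: $C_0=1$, $\Phi(0)=0$ by the QFT Plancherel theorem \ref{placherel-QFT}, hence $\Phi'(0^+)\ge 0$, and the logarithmic differentiation giving $C_0'=\ln\pi-\psi\bigl(\tfrac12\bigr)$ is right. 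As a bonus, your computation pins down the parameter $t$ in the statement as $t=\tfrac12$ (i.e.\ $n/4$ with $n=2$), which the paper leaves undefined -- a genuine defect of the statement as printed.

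The one step you should not treat as routine is the transfer of Pitt's inequality to the QFT, and your phrasing there hides the actual difficulty. It is not true that $|\mathcal{F}_q\{f\}(\omega)|^2$ is pointwise a sum of squares of classical Fourier transforms of the components. Writing $f=g_1+g_2j$ and $\mathcal{F}_q\{f\}=A+Bj$ with $A,B$ being $\mathbb{C}_i$-valued, the non-commutation $e^{-ia}j=je^{ia}$ forces $A$ and $B$ to be linear combinations of $\hat g_1,\hat g_2$ evaluated at \emph{both} $(\omega_1,\omega_2)$ and $(\omega_1,-\omega_2)$; setting $u^{\pm}=\hat g_1(\omega_1,\pm\omega_2)$ and $v^{\pm}=\hat g_2(\omega_1,\pm\omega_2)$ one finds
\begin{equation*}
|A|^2+|B|^2=\tfrac12\bigl(|u^{+}|^2+|u^{-}|^2+|v^{+}|^2+|v^{-}|^2\bigr)+\Re\bigl[\,i\bigl(u^{-}\overline{v^{-}}-u^{+}\overline{v^{+}}\bigr)\bigr],
\end{equation*}
and the cross term does not vanish pointwise. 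It vanishes only after integration against $|\omega|^{-\lambda}$, because that weight is even in $\omega_2$ and the substitution $\omega_2\mapsto-\omega_2$ exchanges the two halves of the cross term with opposite signs. With that observation one gets $\int|\omega|^{-\lambda}|\mathcal{F}_q\{f\}|^2\,d\omega=\int|\omega|^{-\lambda}\bigl(|\hat g_1|^2+|\hat g_2|^2\bigr)\,d\omega$, classical sharp Pitt applies to $g_1$ and $g_2$ separately, and your argument then runs exactly as planned. So: correct strategy, correct differentiation, but the lemma you flag as ``the main obstacle'' is exactly where the quaternionic structure bites, and it must be proved via this evenness cancellation rather than via any pointwise identity -- this is precisely the content of the reference the paper cites in place of a proof.
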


\begin{remark}
  If we apply Plancherl's theorem for QFT \cite{} to the right hand side of \ref{log3}, we get
  \begin{equation}\label{log10}
   \int_{\mathbb{R}^2}\!ln|x||f(x)|^2dx +\int_{\mathbb{R}^2}\!ln|\omega||\mathcal{F}_Q\{f\}(\omega)|^2d\omega\geq \left(\frac{\Gamma^{'}(t)}{\Gamma(t)}-ln\pi\right)\int_{\mathbb{R}^2}\!|\mathcal{F}_Q\{f\}(\omega)|^2d\omega dx,
  \end{equation}

\end{remark}

\begin{lemma}\label{log-lemm1}
  Let $\varphi\in \mathcal{S}(\mathbb{R}^2,\mathbb{H})$ a windowed quaternionic function and $f\in \mathcal{S}(\mathbb{R}^2,\mathbb{H})$. We have
 \begin{equation}\label{log1}
  \int_{\mathbb{R}^2}\int_{\mathbb{R}^2}ln|x||\mathcal{F}^{-1}_Q\{G_{\varphi}f(\omega,b)\}(x)|^2dxdb = \|\varphi\|^2_{L^{2}(\mathbb{R}^2,\mathbb{H})}\int_{\mathbb{R}^2}ln|x||f(x)|^2dx
   \end{equation}
\end{lemma}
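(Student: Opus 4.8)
The plan is to mirror the argument already used for Lemma \ref{lemme2}, since equation \ref{log1} is precisely the logarithmic analogue of \ref{lemme2} with the weight $x_k^2$ replaced by $\ln|x|$. The essential ingredient is the pointwise identity
$$\mathcal{F}^{-1}_Q\{G_{\varphi}f(\omega,b)\}(x)=f(x)\overline{\varphi(x-b)},$$
which follows by applying the inverse two-sided QFT to the relation \ref{equa2}, namely $\mathcal{G}_{\varphi}\{f\}(\omega,b)=\mathcal{F}_q(f(x)\overline{\varphi(x-b)})$, established before the inversion theorem. First I would substitute this identity into the left-hand side of \ref{log1}, so that the inner integrand becomes $\ln|x|\,|f(x)\overline{\varphi(x-b)}|^2$.

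Second, I would invoke the multiplicativity of the quaternion modulus, $|f(x)\overline{\varphi(x-b)}|^2=|f(x)|^2\,|\varphi(x-b)|^2$, which holds because $|pq|=|p||q|$ and $|\overline{q}|=|q|$ for all $p,q\in\mathbb{H}$. This rewrites the double integral as
$$\int_{\mathbb{R}^2}\int_{\mathbb{R}^2}\ln|x|\,|f(x)|^2\,|\varphi(x-b)|^2\,dx\,db.$$
Third, I would apply Fubini's theorem to interchange the order of integration and integrate out the $b$ variable first; the substitution $t=x-b$ gives $\int_{\mathbb{R}^2}|\varphi(x-b)|^2\,db=\|\varphi\|^2_{2}$ independently of $x$. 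The remaining integral is then exactly $\|\varphi\|^2_{2}\int_{\mathbb{R}^2}\ln|x|\,|f(x)|^2\,dx$, which is the right-hand side of \ref{log1}.

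The only technical point worth checking is the legitimacy of Fubini's theorem, i.e. the absolute integrability of $\ln|x|\,|f(x)|^2\,|\varphi(x-b)|^2$ over $\mathbb{R}^2\times\mathbb{R}^2$. Since $f,\varphi\in\mathcal{S}(\mathbb{R}^2,\mathbb{H})$, both functions decay faster than any polynomial, so the Schwartz decay dominates the factor $\ln|x|$ at infinity, while near the origin $\ln|x|$ has only an integrable singularity. Hence the integrand is absolutely integrable and every interchange of integrals, as well as the translation-invariance of the Lebesgue measure used in the substitution, is fully justified. I expect no genuine obstacle beyond this routine verification; the heart of the lemma is simply the factorization of the inverse GQFT together with translation invariance of $\|\varphi\|^2_{2}$.
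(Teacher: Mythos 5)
Your proposal is correct and follows essentially the same route as the paper's own proof: substitute the factorization $\mathcal{F}^{-1}_Q\{G_{\varphi}f(\omega,b)\}(x)=f(x)\overline{\varphi(x-b)}$, use multiplicativity of the quaternion modulus, exchange the order of integration, and integrate out $b$ via the substitution $t=x-b$. Your additional verification that Schwartz decay and the integrable singularity of $\ln|x|$ justify Fubini is a welcome refinement that the paper omits, but it does not change the argument.
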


\begin{proof}
  By a simple calculation we get,
\begin{eqnarray}
  % \nonumber % Remove numbering (before each equation)
    \int_{\mathbb{R}^2}\int_{\mathbb{R}^2}ln|x||\mathcal{F}^{-1}_Q\{G_{\varphi}f(\omega,b)\}(x)|^2 dx db
    &=& \int_{\mathbb{R}^2}\int_{\mathbb{R}^2}ln|x||f(x)\overline{\varphi(x-b)}|^2 dx db\nonumber  \\
    &=&\int_{\mathbb{R}^2}\int_{\mathbb{R}^2}ln|x||f(x)|^2|\varphi(x-b)|^2 dx db \nonumber  \\
    &=&\int_{\mathbb{R}^2}ln|x||f(x)|^2(\int_{\mathbb{R}^2}|\varphi(x-b)|^2db)dx \label{log2} \\
     &=&\|\varphi\|^2_{L^{2}(\mathbb{R}^2,\mathbb{H})}\int_{\mathbb{R}^2}ln|x||f(x)|^2 dx. \nonumber
\end{eqnarray}

  To obtain the result of lemma \ref{log-lemm1} we use a substitution in \ref{log2}.
\end{proof}

\begin{corollary}
  For $f\in \mathcal{S}(\mathbb{R}^2,\mathbb{H})$, and $\varphi \in \mathcal{S}(\mathbb{R}^2,\mathbb{H})$, we have
   \begin{equation}\label{log4}
   \int_{\mathbb{R}^2}\!ln|x||\mathcal{F}^{-1}_Q\mathcal{F}_Q(f)(x)|^2dx+\int_{\mathbb{R}^2}\!ln|\omega||\mathcal{F}_Q\{f\}(\omega)|^2d\omega\geq \left(\frac{\Gamma^{'}(t)}{\Gamma(t)}-ln\pi\right)\!\int_{\mathbb{R}^2}\!|\mathcal{F}_Q(\omega)|^2d\omega,
  \end{equation}
\end{corollary}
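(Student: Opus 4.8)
The plan is to derive this corollary directly from the logarithmic uncertainty principle for the QFT, Theorem \ref{theo-log1}, by rewriting both the first term on the left-hand side and the single term on the right-hand side using the two basic identities already established for the QFT, namely the inversion formula and the Plancherel theorem. No new estimate is needed; the inequality itself is inherited unchanged from \ref{log3}.

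First I would invoke the inversion formula, Theorem \ref{QFT-inversion}. Since $f \in \mathcal{S}(\mathbb{R}^2,\mathbb{H}) \subset L^{2}(\mathbb{R}^2,\mathbb{H})$ and its QFT is likewise in $L^{2}$, we have the pointwise identity $f(x) = \mathcal{F}^{-1}_Q\{\mathcal{F}_Q(f)\}(x)$, hence $|f(x)|^2 = |\mathcal{F}^{-1}_Q\{\mathcal{F}_Q(f)\}(x)|^2$. Substituting this into the first integral of \ref{log3} shows that $\int_{\mathbb{R}^2} \ln|x|\,|f(x)|^2\,dx$ coincides verbatim with the first integral appearing in \ref{log4}; only the notation for $f$ has been expanded, while the value of the integral is untouched.

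Next I would treat the right-hand side by means of the Plancherel theorem for the QFT, Theorem \ref{placherel-QFT}, which gives $\|f\|_2 = \|\mathcal{F}_q(f)\|_2$ and therefore $\int_{\mathbb{R}^2}|f(x)|^2\,dx = \int_{\mathbb{R}^2}|\mathcal{F}_Q(f)(\omega)|^2\,d\omega$. Replacing $\int_{\mathbb{R}^2}|f(x)|^2\,dx$ on the right of \ref{log3} by the equal quantity $\int_{\mathbb{R}^2}|\mathcal{F}_Q(f)(\omega)|^2\,d\omega$ yields exactly the right-hand side of \ref{log4}. Combining the two substitutions transforms \ref{log3} into \ref{log4}, which is the claimed inequality; this is essentially the content already recorded in the preceding Remark, now stated with the first term written through the reconstruction $\mathcal{F}^{-1}_Q\mathcal{F}_Q(f)$.

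Because every step is a literal substitution of an earlier identity into an inequality that is assumed proved, I do not foresee any genuine obstacle. The only point that deserves a word of care is the verification that the hypotheses of both Theorem \ref{QFT-inversion} and Theorem \ref{placherel-QFT} are satisfied, and this is immediate since Schwartz functions belong to $L^{2}(\mathbb{R}^2,\mathbb{H})$ and are carried into $L^{2}(\mathbb{R}^2,\mathbb{H})$ by the QFT, so that the inversion and energy identities both apply.
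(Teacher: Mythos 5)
Your proposal is correct and is exactly the argument the paper intends: the corollary is stated without an explicit proof, being the immediate consequence of Theorem \ref{theo-log1} after rewriting the first term via the inversion formula (Theorem \ref{QFT-inversion}) and the right-hand side via the Plancherel theorem (Theorem \ref{placherel-QFT}), just as the preceding Remark and the earlier Corollary \ref{coro1} do for the Heisenberg case. Both substitutions are legitimate since Schwartz functions satisfy the $L^2$ hypotheses, so nothing further is needed.
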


\begin{theorem}
  Let $f\in \mathcal{S}(\mathbb{R}^2,\mathbb{H})$ and $\varphi \in \mathcal{S}(\mathbb{R}^2,\mathbb{H})$ a quaternionc windowed function, we have the following algorithmic inequality,

  \begin{equation}\label{log9}
   \|\varphi\|^2_{L^{2}(\mathbb{R}^2,\mathbb{H})}\! \int_{\mathbb{R}^2}\!ln|x|
    | f(x)|^2dx + \int_{\mathbb{R}^2}\!\int_{\mathbb{R}^2}\!ln|\omega||G_\varphi f(\omega,b)|^2d\omega db  \geq \|\varphi\|^2_{L^{2}(\mathbb{R}^2,\mathbb{H})}\!\left(\frac{\Gamma^{'}(t)}{\Gamma(t)}-ln\pi\right) \int_{\mathbb{R}^2}\int_{\mathbb{R}^2}\!| f(x)|^2dx,
\end{equation}

\end{theorem}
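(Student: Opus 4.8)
The plan is to transfer the QFT logarithmic uncertainty principle to the GQFT by the same window–slicing device already used in Theorem \ref{th-Heisenberg}, resting on the fundamental identity $\mathcal{F}^{-1}_Q\{G_{\varphi}f(\omega,b)\}(x)=f(x)\overline{\varphi(x-b)}$ together with Lemma \ref{log-lemm1} and the Plancherel formula \ref{parseval1}. The idea is to apply the scalar logarithmic inequality for each fixed translation $b$, and then integrate the resulting family of inequalities against $db$.

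First I would fix $b\in\mathbb{R}^2$ and introduce the auxiliary function $g_b(x):=f(x)\overline{\varphi(x-b)}$. Since $f,\varphi\in\mathcal{S}(\mathbb{R}^2,\mathbb{H})$, the product $g_b$ again belongs to $\mathcal{S}(\mathbb{R}^2,\mathbb{H})$ for every $b$, and by Definition \ref{def-GQFT} its two–sided QFT is exactly $\mathcal{F}_Q\{g_b\}(\omega)=G_\varphi f(\omega,b)$. Applying the corollary \ref{log4} (equivalently \ref{log3} combined with the inversion $\mathcal{F}^{-1}_Q\mathcal{F}_Q g_b=g_b$) to $g_b$ then gives, for each $b$,
\[
\int_{\mathbb{R}^2} ln|x|\,|f(x)\overline{\varphi(x-b)}|^2\,dx + \int_{\mathbb{R}^2} ln|\omega|\,|G_\varphi f(\omega,b)|^2\,d\omega \ \geq\ \Bigl(\tfrac{\Gamma'(t)}{\Gamma(t)}-ln\pi\Bigr)\int_{\mathbb{R}^2}|G_\varphi f(\omega,b)|^2\,d\omega.
\]

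Next I would integrate this pointwise-in-$b$ inequality over $b\in\mathbb{R}^2$; since integration preserves the direction of an inequality, the estimate survives. The first term becomes $\|\varphi\|^2_{2}\int_{\mathbb{R}^2}ln|x||f(x)|^2dx$ by Lemma \ref{log-lemm1}, equation \ref{log1}; the second term is already in the target double–integral form; and the right-hand side turns into $\bigl(\tfrac{\Gamma'(t)}{\Gamma(t)}-ln\pi\bigr)\|\mathcal{G}_\varphi f\|^2_{2}$, which by the Plancherel theorem for the GQFT \ref{Parseval-GQFT} equals $\bigl(\tfrac{\Gamma'(t)}{\Gamma(t)}-ln\pi\bigr)\|f\|^2_{2}\|\varphi\|^2_{2}$. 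Assembling the three pieces yields exactly the claimed inequality \ref{log9}.

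The argument is essentially bookkeeping once the slicing identity is in hand, so the only genuine point requiring care—and the step I would treat as the main obstacle—is justifying the interchange of the $b$–integration with the $x$– and $\omega$–integrations. I expect this to follow from Fubini's theorem, whose hypotheses are met because $f$ and $\varphi$ are Schwartz: both $\int_{\mathbb{R}^2}\int_{\mathbb{R}^2} ln|x|\,|g_b(x)|^2\,dx\,db$ and $\int_{\mathbb{R}^2}\int_{\mathbb{R}^2}|G_\varphi f(\omega,b)|^2\,d\omega\,db$ converge, so the change of order of integration is legitimate precisely as in the proof of Lemma \ref{log-lemm1}.
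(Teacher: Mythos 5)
Your proposal is correct and follows essentially the same route as the paper's own proof: apply the QFT logarithmic uncertainty principle to the windowed slice for fixed $b$, integrate in $b$, and conclude via Lemma \ref{log-lemm1} and the Plancherel theorem \ref{Parseval-GQFT}. In fact your formulation with the auxiliary function $g_b(x)=f(x)\overline{\varphi(x-b)}$, whose QFT is $G_\varphi f(\cdot,b)$, states precisely what the paper means by its looser phrase ``replace $f$ by $G_\varphi f$'', and your attention to the Fubini justification is a welcome refinement rather than a departure.
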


\begin{proof}
  For classical two-sided quaternionic Fourier transform, by theorem \ref{theo-log1},
   \begin{equation}\label{log5}
   \int_{\mathbb{R}^2}ln|x||f(x)|^2dx+\int_{\mathbb{R}^2}ln|\omega||\mathcal{F}_Q\{f\}(\omega)|^2d\omega\geq \left(\frac{\Gamma^{'}(t)}{\Gamma(t)}-ln\pi\right)\int_{\mathbb{R}^2}|f(x)|^2dx,
  \end{equation}

we replace $f$ by $G_\varphi f$ on both sides of \ref{log5}, we get

\begin{equation}\label{log6}
   \int_{\mathbb{R}^2}\!ln|\omega||G_\varphi f(\omega,b)|^2d\omega+\int_{\mathbb{R}^2}\!ln|x||\mathcal{F}_Q\{G_\varphi f\}(x)|^2dx\geq \left(\frac{\Gamma^{'}(t)}{\Gamma(t)}-ln\pi\right)\!\int_{\mathbb{R}^2}|G_\varphi f(\omega,b)|^2dx,
\end{equation}

  Integrating both sides of this equation with respect to $db$, we obtain

 \begin{equation}\label{log7}
    \int_{\mathbb{R}^2}\!\int_{\mathbb{R}^2}\!ln|\omega||G_\varphi f(\omega,b)|^2d\omega db + \int_{\mathbb{R}^2}\!\int_{\mathbb{R}^2}\!ln|x||\mathcal{F}_Q\{G_\varphi f\}(x)|^2dx db \geq \left(\frac{\Gamma^{'}(t)}{\Gamma(t)}-ln\pi\right)\! \int_{\mathbb{R}^2}\!\int_{\mathbb{R}^2}|G_\varphi f(\omega,b)|^2dxdb,
\end{equation}
 Applying lemma \ref{log-lemm1} into the second term on the left hand side of \ref{log7}, yields

   \begin{equation}\label{log8}
    \int_{\mathbb{R}^2}\!\int_{\mathbb{R}^2}\!ln|\omega||G_\varphi f(\omega,b)|^2d\omega db +\|\varphi\|^2_{L^{2}(\mathbb{R}^2,\mathbb{H})} \!\int_{\mathbb{R}^2}\!ln|x|
    | f(x)|^2dx \geq \!\left(\frac{\Gamma^{'}(t)}{\Gamma(t)}-ln\pi\right)\!\|\varphi\|^2_{L^{2}(\mathbb{R}^2,\mathbb{H})} \! \int_{\mathbb{R}^2}\!\int_{\mathbb{R}^2}\!|G_\varphi f(\omega,b)|^2dxdb,
\end{equation}

we applying the Placncherel formula, we obtain our desired result,

 \begin{equation}\label{log9}
   \|\varphi\|^2_{L^{2}(\mathbb{R}^2,\mathbb{H})}\! \int_{\mathbb{R}^2}\!ln|x|
    | f(x)|^2dx + \int_{\mathbb{R}^2}\!\int_{\mathbb{R}^2}\!ln|\omega||G_\varphi f(\omega,b)|^2d\omega db  \geq \|\varphi\|^2_{L^{2}(\mathbb{R}^2,\mathbb{H})}\!\left(\frac{\Gamma^{'}(t)}{\Gamma(t)}-ln\pi\right) \!\int_{\mathbb{R}^2}\!\int_{\mathbb{R}^2}|f(x)|^2dx,
\end{equation}

\end{proof}

\end{document}